\documentclass[12pt]{amsart}
\usepackage{amsmath}
\usepackage{tikz}
\usetikzlibrary{calc} 
\usepackage{amsthm}
\usepackage{amssymb}
\usepackage{graphicx}
\usepackage{blindtext}
\usepackage{hyperref}
\numberwithin{equation}{section}

\usepackage[margin=2.75cm]{geometry}
\usepackage{color}
\usepackage{tabularx}
\usepackage{geometry}
\usepackage{tikz}
\usepackage[mathscr]{euscript}
\geometry{margin=2.5cm}
\usepackage{amsthm}
\usepackage[utf8]{inputenc}
\usepackage{hyperref}
\usepackage{booktabs}
\numberwithin{equation}{section}
\newcommand{\R}{\mathbb{R}}

\newtheorem{theorem}{Theorem}[section]

\newtheorem{lemma}[theorem]{Lemma}
\newtheorem{proposition}[theorem]{Proposition}

\newtheorem{definition}[theorem]{Definition}
\newtheorem{example}[theorem]{Example}
\newtheorem{remark}[theorem]{Remark}
\date{\today}

\date{}
\title{Fractional Torsional Rigidity of Compact Metric Graphs}

\author{Sedef ÖZCAN}
\address{Sedef Özcan, Dokuz Eylül University, Faculty of Science, Department of Mathematics, Izmir, Turkey}
\keywords{Fractional Laplace operator, Quantum Graphs, Torsion Function, Torsional Rigidity}

\subjclass[2020]{Primary: 34B45. Secondary: 05C50, 35P15, 1Q35}

\begin{document}
	
	\maketitle
    \begin{abstract}
        This paper investigates fractional torsional rigidity on compact, connected metric graphs, a novel extension of the classical concept to nonlocal operators. The fractional torsional rigidity is defined as the $L^1$-norm of the fractional torsion function, which is the unique solution to the boundary value problem $(-\Delta_{\mathcal{G}})^\alpha u_\alpha = 1$ on a graph $\mathcal{G}$ with zero boundary conditions at Dirichlet vertices. We establish a variational characterization for this quantity, which serves as a powerful tool to prove a series of results on its geometric dependence. By applying surgery principles, we derive explicit upper and lower bounds, indicating that the interval serves as an upper comparison case and the flower graph as a lower one among graphs of fixed total length. These findings mirror the classical case, yet the methods required are substantially different due to the nonlocal nature of the fractional Laplacian.
    \end{abstract}

	\section{Introduction}
This paper investigates fractional torsional rigidity and the associated torsion function on compact, connected metric graphs. The concept of torsional rigidity has its roots in elasticity theory, where it measures how resistant a body is to twisting forces. From a mathematical point of view, torsional rigidity is defined through a Poisson problem on a domain and has long been studied in relation to isoperimetric inequalities, spectral geometry, and functional inequalities. It provides a natural link between the geometry of a domain and the analytic properties of the Laplacian.

Classically, the \emph{torsional rigidity} of a domain $\Omega \subset \mathbb{R}^2$ is defined as the $L^{1}$-norm of the torsion function
\begin{equation}
	T(\Omega):= \lVert \upsilon \rVert_{L^{1}(\Omega)},
\end{equation}
where $\upsilon$ is the unique solution of the boundary value problem
\[
\begin{cases}
	-\Delta \upsilon(x) =1, &x \in \Omega,\\
	\upsilon(z)=0, & z \in \partial \Omega,
\end{cases}
\]
This quantity was first introduced in mechanics by Saint–Venant to describe the resistance of a cylindrical beam to torsion, and later reinterpreted by Pólya as a purely geometric constant depending only on the shape and size of $\Omega$ \cite{pol1}.
He proved that among all open bounded domains of fixed measure, the disk maximizes torsional rigidity, thereby establishing one of the first isoperimetric inequalities for this quantity.
Following this work, torsional rigidity has been widely investigated in analysis and spectral geometry. Brasco \cite{brasco} applied rearrangement techniques to obtain sharp functional inequalities involving torsional rigidity.

Over the past few decades, metric graphs have emerged as a natural framework to model networks and thin structures such as nanotubes or waveguides \cite[Chapter~7]{BK}.  
Their spectral theory has been studied in great depth \cite{BK,kost1,kost2,kuch1,kuch2}, and more recently, the focus has shifted toward spectral geometry, investigating how the shape and connectivity of a graph influence the spectrum of its Laplacian.  
In this setting, torsional rigidity plays a role analogous to that of the first Laplace eigenvalue in the classical domain case, reflecting fundamental principles such as the Faber--Krahn inequality \cite{Faber-23,Krahn-25}.  
Yet, despite these parallels, torsional rigidity on metric graphs has received little attention so far, with only a few recent contributions available in the literature \cite{colla,MugnoloPlumer2023,OzcanTaeufer2024}.

The first systematic study of torsional rigidity on metric graphs was carried out by Mugnolo and Plümer \cite{MugnoloPlumer2023}, who developed a theory under the assumption that at least one Dirichlet vertex is present.  
This requirement is crucial, since in the absence of Dirichlet vertices, the Laplacian is not invertible, and hence the torsion function cannot be defined or expected to be positive.  
More recently, in \cite{OzcanTaeufer2024}, torsional rigidity for metric graphs equipped with $\delta$-type vertex conditions is investigated.  

In parallel with these developments, increasing attention has been devoted to fractional Laplace operators, motivated by their broad range of applications in physics, probability, and analysis.  
For $\alpha \in (0,1)$, the fractional Laplacian $(-\Delta)^\alpha$ provides a natural interpolation between local diffusion and nonlocal effects, and arises in models of anomalous transport, population dynamics, and finance.  
On bounded domains in $\mathbb{R}^d$, it is typically defined either through the spectral decomposition of the Dirichlet Laplacian (see, e.g., \cite[Section~2.5.1]{lischke}).  
In the setting of metric graphs, we adopt the spectral approach, which is well-suited to combining nonlocal operators with network structures.  

Within this framework, we define the \emph{fractional torsion function} $u_\alpha$ as the unique positive solution of the boundary value problem
\[
(-\Delta_{\mathcal{G}})^\alpha u_\alpha = 1 \quad \text{on }\mathcal{G}, 
\qquad u_\alpha|_{\mathcal{V}_D}=0 ,
\]
where $\mathcal{V}_D\subseteq \mathcal{V}$ is the non-empty set of Dirichlet vertices.
The corresponding \emph{fractional torsional rigidity} is given by
\[
T_\alpha(\mathcal{G}) := \int_{\mathcal{G}} u_\alpha(x)\, dx,
\]
that is, the $L^1$-mass of the torsion function.  
Equivalently, using the spectral decomposition of the Laplacian with eigenpairs $\{(\lambda_n,\varphi_n)\}_{n\geq 1}$, one has
\begin{equation} \label{eq:intro torsion_series}
u_{\alpha} = \big((-\Delta_{\mathcal{G}})^\alpha\big)^{-1} 1 
= \sum_{n=1}^\infty \frac{\langle 1, \varphi_n \rangle}{\lambda_n^\alpha}\, \varphi_n,
\qquad 
T_\alpha(\mathcal{G}) = \sum_{n=1}^\infty \frac{|\langle 1, \varphi_n \rangle|^2}{\lambda_n^\alpha}.
\end{equation}
The functional framework is given by the fractional Sobolev space
\begin{equation}
    H_0^\alpha(\mathcal{G}) := \left\{ u \in L^2(\mathcal{G}) \;\middle|\; 
    \sum_{n=1}^\infty \lambda_n^{\alpha} |\langle u, \varphi_n \rangle|^2 < \infty,
    \ \ u|_{\mathcal{V}_D}=0 \right\},
\end{equation}
endowed with the norm
\[
\|u\|_{H^\alpha_0(\mathcal{G})}^2 := \sum_{n=1}^\infty \lambda_n^\alpha |\langle u, \varphi_n \rangle|^2.
\]
This norm dominates the $L^2$–norm, i.e.\ there exists $c_\alpha>0$ such that
\[
\|u\|_{L^2(\mathcal{G})}^2 \leq c_\alpha^{-1}\, \|u\|_{H_0^\alpha(\mathcal{G})}^2,
\]
but the converse inequality does not hold in general.
For $\alpha > \tfrac12$, functions in $H^\alpha_0(\mathcal{G})$ admit well-defined traces at the vertices, so the condition $u|_{\mathcal{V}D}=0$ can be imposed in the usual Sobolev sense, and in fact $u_\alpha$ is continuous on $\mathcal{G}\setminus\mathcal{V}_D$. When $\alpha \leq \tfrac12$, however, vertex traces are not defined, so the boundary condition cannot be interpreted in this way. Since throughout we adopt the spectral definition of the fractional Laplacian via its eigenfunction expansion, the requirement $u|_{\mathcal{V}_D}=0$ is in fact encoded in the choice of Dirichlet eigenbasis, which makes the formulation consistent for all $\alpha\in(0,1)$. The key distinction is that for $\alpha>\tfrac12$ one can also appeal to a Sobolev–analytic interpretation with better regularity properties, whereas for $\alpha\leq\tfrac12$ the solution is defined spectrally but enjoys weaker regularity.

It is worth noting that classical rearrangement arguments, which play a central role in the domain setting \cite{brasco} and have been adapted to metric graphs in \cite{MugnoloPlumer2023,OzcanTaeufer2024}, is not applied here.  
The reason lies in the change of the underlying function space and the norm structure.

A key tool in our analysis is the following variational characterization of fractional torsional rigidity:
\begin{equation}\label{eqn:variationalcharacterization}
T_{\alpha}(\mathcal{G})
= \sup_{f \in H_0^\alpha(\mathcal{G})} 
\frac{\left( \int_{\mathcal{G}} f(x)\, dx \right)^2}
     {\langle f, (-\Delta_{\mathcal{G}})^\alpha f \rangle}
= \max_{f \in H_0^\alpha(\mathcal{G})} 
\frac{\left( \int_{\mathcal{G}} f(x)\, dx \right)^2}
     {\langle f, (-\Delta_{\mathcal{G}})^\alpha f \rangle}.
\end{equation}
This identity enables us to derive both upper and lower bounds for $T_\alpha(\mathcal{G})$ directly by means of surgery principles, a technique that has proved powerful in the analysis of extremal spectral quantities on graphs. 
In particular, we show that, for graphs of fixed total length, the fractional torsional rigidity is maximized by the interval and minimized by the flower graph  
\begin{equation}\label{eq:intro_bounds}
|\mathcal{E}|\; \frac{8\,L^{1+2\alpha}}{\pi^{2(1+\alpha)}}\Big(1-2^{-2(1+\alpha)}\Big)\,\zeta\big(2(1+\alpha)\big) 
\;\leq\; T_\alpha(\mathcal{G}) \;\leq\; 
\frac{8\,2^{2\alpha}\,|\mathcal{G}|^{2\alpha+1}}{\pi^{2+2\alpha}} \left(1 - 2^{-(2+2\alpha)}\right)\zeta(2+2\alpha),
\end{equation}
where  $\zeta(s)$ denotes the Riemann zeta function,
\[
\zeta(s) := \sum_{n=1}^\infty \frac{1}{n^s}, \qquad \Re(s) > 1.
\]
 Although this conclusion parallels the classical case, the methods required to establish it are substantially different.  Moreover, they are consistent with the classical case $\alpha=1$, in the sense that the bounds in \eqref{eq:intro_bounds} reduce numerically to the known estimates for the classical torsional rigidity.

The paper is organized as follows.  
In Chapter~\ref{section: fractional laplace} we recall the basic framework of metric graphs, the Laplacian, and its fractional powers.  
Chapter~\ref{section: fractional torsional rigidity} introduces the notions of the fractional torsion function and torsional rigidity, and establishes their convergence and positivity for $\alpha \in (0,1)$.  
In Chapter~\ref{section: fundamental examples} we compute explicit examples, including the interval and flower graphs. 
Chapter~\ref{section: surgery principles} develops the functional analytic framework and proves the variational characterization.
In Chapter~\ref{section: variational char} we establish surgery principles for fractional torsional rigidity.  
Finally, Chapter~\ref{section:bounds} derives upper and lower bounds.

\section{Fractional Laplace Operator on Metric Graphs}\label{section: fractional laplace}
We begin by recalling the basic notions of metric graphs and Laplace operators on them; see \cite{BK,mug3} for comprehensive references.
\begin{definition}
A \emph{metric graph} $\mathcal{G}$ is a topological space associated with a finite combinatorial graph 
$\mathsf{G} = (\mathcal{V}, \mathcal{E})$, where to each edge $\mathsf{e} \in \mathcal{E}$ one associates an interval of length 
$\ell_{\mathsf{e}} \in (0,\infty)$, and these intervals are glued together at their endpoints according to the adjacency structure of $\mathsf{G}$.
Every edge $\mathsf{e} \in \mathcal{E}$ can be identified with the interval $[0,\ell_{\mathsf{e}}]$.

\end{definition}

Such structures, sometimes called \emph{topological networks}, come naturally equipped with the shortest-path distance and the one-dimensional Lebesgue measure~\cite{nica,mug3}, and although infinite graphs have also been studied~\cite{KostenkoN-19,KostenkoMN-22,DuefelKennedyMugnoloPluemerTaeufer-22,KennedyMugnoloTaeufer-2024_preprint}, we focus here on finite graphs to avoid analytical complications.

\begin{definition}\label{assump}
	A metric graph $\mathcal{G}$ is called \emph{connected} if every pair of points $x,y \in \mathcal{G}$ can be joined by a continuous path lying in $\mathcal{G}$.  
	The \emph{total length} of $\mathcal{G}$ is defined as
	\[
	|\mathcal{G}| := \sum_{\mathsf{e} \in \mathsf{E}} \ell_{\mathsf{e}} ,
	\]
	while $\operatorname{dist}_{\mathcal{G}}$ denotes the intrinsic shortest-path metric on $\mathcal{G}$.
\end{definition}

When considering functions $u : \mathcal{G} \to \mathbb{C}$ we denote by $u_{\mathsf{e}}$ the restriction of $u$ to the edge $\mathsf{e}$.
For a vertex $\mathsf{v}$ incident with an edge $\mathsf{e}$, we denote by 
	\[
	\frac{\partial u_{\mathsf{e}}}{\partial n}(\mathsf{v})
	\]
	the derivative of $u$ along $\mathsf{e}$ at the endpoint $\mathsf{v}$, taken in the direction pointing into $\mathsf{v}$.
   For $1 \leq p \leq \infty$, the space $L^p(\mathcal{G})$ is the direct sum
\[
L^p(\mathcal{G}) := \bigoplus_{e\in\mathcal{E}} L^p(0,\ell_e),
\]
with norm
\[
\|u\|_{L^p(\mathcal{G})} :=
\begin{cases}
\left( \displaystyle\sum_{e\in\mathcal{E}} \int_0^{\ell_e} |u_e(x)|^p \, dx \right)^{1/p}, & 1 \leq p < \infty, \\[1.2em]
\displaystyle\max_{e\in\mathcal{E}} \, \operatorname*{ess\,sup}_{x \in (0,\ell_e)} |u_e(x)|, & p = \infty .
\end{cases}
\]
We distinguish a subset of vertices $\emptyset \neq \mathcal{V}_D \subseteq \mathcal{V}$ where Dirichlet boundary conditions are imposed. On the remaining vertices $\mathcal{V}\setminus \mathcal{V}_D$, we impose standard \emph{Kirchhoff conditions}, i.e.,\[
f \ \text{is continuous at each vertex } v, \quad \text{ and } \quad
\sum_{e\sim v} \frac{\partial u_{\mathsf{e}}}{\partial n}(\mathsf{v}) = 0,
\]
where the sum runs over all edges incident to $v$, and $\partial_\nu$ denotes the outward derivative along an edge.
The corresponding \emph{Laplacian} $-\Delta_{\mathcal{G}}$ acts as negative of the second derivative on each edge,
\[
(-\Delta_{\mathcal{G}} f)_e = -f_e'' \quad \text{for } e\in\mathcal{E},
\]
with domain consisting of all edgewise $H^2$-functions that satisfy the vertex conditions. The operator $-\Delta_{\mathcal{G}}$ is self-adjoint, lower semibounded and has compact resolvent, hence its spectrum consists of a discrete sequence of eigenvalues
\[
0 < \lambda_1 \leq \lambda_2 \leq \cdots \to \infty,
\]
with corresponding orthonormal eigenfunctions $\{\varphi_n\}_{n=1}^\infty$ (see \cite{dunford}). Any $f \in L^2(\mathcal{G})$ can be expanded as 
\[
f = \sum_{n=1}^\infty \langle f, \varphi_n \rangle \varphi_n,
\]
where $\langle \cdot, \cdot \rangle$ denotes the inner product in $L^2(\mathcal{G})$. \\
For $\alpha \in (0,1)$, the fractional Laplacian $(-\Delta_{\mathcal{G}})^\alpha$ is defined by
\begin{equation}\label{eq:fractional-laplacian}
	(-\Delta_{\mathcal{G}})^\alpha f := \sum_{n=1}^\infty \lambda_n^\alpha \langle f, \varphi_n \rangle \varphi_n, \quad f \in \mathrm{D}((-\Delta_{\mathcal{G}})^\alpha),
\end{equation}
with domain
\[
\mathrm{D}((-\Delta_{\mathcal{G}})^\alpha) := \Bigl\{ f \in L^2(\mathcal{G}) : \sum_{n=1}^\infty \lambda_n^{2\alpha} |\langle f, \varphi_n \rangle|^2 < \infty \Bigr\}.
\]
In particular, the operator $(-\Delta_{\mathcal{G}})^\alpha$ defined above is self-adjoint and positive on $L^2(\mathcal{G})$.

\section{Fractional Torsion Function and Torsional Rigidity}\label{section: fractional torsional rigidity}
In this section, we introduce the fractional torsion function on a compact metric graph and study its basic properties. 
The torsion function is defined as the solution to a Poisson-type problem involving the fractional Laplacian, 
and its $L^1$–norm yields the fractional torsional rigidity of the graph.

\begin{definition}[Fractional Torsion Function]
	Let $\alpha \in (0,1)$. The \emph{fractional torsion function} $u_{\alpha}$ is defined as the unique solution to
	\begin{equation}\label{eq:torsion_eq}
		(-\Delta_{\mathcal{G}})^\alpha u_{\alpha} = 1 \quad \text{on } \mathcal{G}.
	\end{equation}
\end{definition}
	Since the Laplacian $-\Delta_{\mathcal{G}}$ has strictly positive spectrum, the operator $(-\Delta_{\mathcal{G}})^\alpha$ is invertible, and the solution can be expressed via the spectral decomposition as
	\begin{equation}\label{eq:torsion_series}
		u_{\alpha} = ((-\Delta_{\mathcal{G}})^\alpha)^{-1} 1 = \sum_{n=1}^\infty \frac{\langle 1, \varphi_n \rangle}{\lambda_n^\alpha} \varphi_n.
	\end{equation}
 Strictly speaking, $u_\alpha$ also depends on the choice of Dirichlet vertices $\mathcal{V}_D$, but we suppress this dependence in the notation for simplicity. 
In the next lemma, we will show that the fractional torsion function is well-defined via its spectral series, thereby establishing its continuity on compact subsets of the graph outside the Dirichlet vertices.
\begin{lemma}
	The series \eqref{eq:torsion_series} converges uniformly on compact subsets of $\mathcal{G} \setminus \mathcal{V}_D$, and hence defines a continuous function
	\(
u_{\alpha} \in C(\mathcal{G} \setminus \mathcal{V}_D)
	\) if $\alpha > \tfrac{1}{2}$.
\end{lemma}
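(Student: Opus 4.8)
The plan is to obtain uniform convergence of the series \eqref{eq:torsion_series} on compact subsets of $\mathcal{G}\setminus\mathcal{V}_D$ by controlling the coefficients $\langle 1,\varphi_n\rangle/\lambda_n^\alpha$ together with pointwise bounds on the eigenfunctions $\varphi_n$, and then invoking Weierstrass' $M$-test (or the completeness of the space of continuous functions on a compact set under uniform convergence). First I would recall Weyl asymptotics for the metric graph Laplacian: $\lambda_n \sim (\pi n /|\mathcal{G}|)^2$, so $\lambda_n^\alpha \gtrsim n^{2\alpha}$ for large $n$. Next, by Cauchy--Schwarz, $|\langle 1,\varphi_n\rangle| \le \|1\|_{L^2(\mathcal{G})}\,\|\varphi_n\|_{L^2(\mathcal{G})} = |\mathcal{G}|^{1/2}$, which already bounds the coefficients by a constant; more usefully, $\sum_n |\langle 1,\varphi_n\rangle|^2 = |\mathcal{G}| < \infty$.

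The key step is an $L^\infty$ bound on eigenfunctions of the form $\|\varphi_n\|_{L^\infty(\mathcal{G})} \le C(1+\lambda_n)^{1/4}$, i.e. $\|\varphi_n\|_\infty \lesssim \lambda_n^{1/4}$. This follows from a one-dimensional Gagliardo--Nirenberg / Sobolev interpolation inequality on each edge: for $f\in H^1(0,\ell)$ one has $\|f\|_{L^\infty(0,\ell)}^2 \le C(\ell)\big(\|f'\|_{L^2}\|f\|_{L^2} + \|f\|_{L^2}^2\big)$, and summing over edges together with $\|\varphi_n'\|_{L^2(\mathcal{G})}^2 = \langle \varphi_n, -\Delta_\mathcal{G}\varphi_n\rangle = \lambda_n$ and $\|\varphi_n\|_{L^2(\mathcal{G})}=1$ gives $\|\varphi_n\|_{L^\infty(\mathcal{G})}^2 \lesssim \lambda_n^{1/2}+1 \lesssim \lambda_n^{1/2}$. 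Combining this with the coefficient bound, the $n$-th term of the series is bounded uniformly on all of $\mathcal{G}$ by
\[
\frac{|\langle 1,\varphi_n\rangle|}{\lambda_n^\alpha}\,\|\varphi_n\|_{L^\infty(\mathcal{G})}
\;\lesssim\; \frac{|\langle 1,\varphi_n\rangle|\,\lambda_n^{1/4}}{\lambda_n^\alpha}
\;\lesssim\; \frac{|\langle 1,\varphi_n\rangle|}{n^{2\alpha-1/2}}.
\]
By Cauchy--Schwarz on the tail, $\sum_{n\ge N} |\langle 1,\varphi_n\rangle|\, n^{-(2\alpha-1/2)} \le \big(\sum_n |\langle 1,\varphi_n\rangle|^2\big)^{1/2}\big(\sum_{n\ge N} n^{-(4\alpha-1)}\big)^{1/2}$, and the last sum converges precisely when $4\alpha - 1 > 1$, i.e. $\alpha > \tfrac12$ — which is exactly the hypothesis. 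Hence the series converges absolutely and uniformly on $\mathcal{G}$, and its limit is continuous where the partial sums are; since each $\varphi_n$ is continuous on $\mathcal{G}\setminus\mathcal{V}_D$ (being edgewise $H^2\hookrightarrow C^1$ and continuous at Kirchhoff vertices), the limit $u_\alpha$ lies in $C(\mathcal{G}\setminus\mathcal{V}_D)$.

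The main obstacle is establishing the eigenfunction $L^\infty$ bound $\|\varphi_n\|_\infty \lesssim \lambda_n^{1/4}$ cleanly; on a metric graph one must be slightly careful that the interpolation constant $C(\ell)$ can be taken uniform, which is fine since there are finitely many edges of fixed positive lengths, and one must handle the vertex continuity so that the edgewise $L^\infty$ bounds assemble into a genuine $L^\infty(\mathcal{G})$ bound — but this is automatic because the $L^\infty$ norm on $\mathcal{G}$ is the max over edges. A secondary point is that, strictly speaking, uniform convergence is only needed on compact subsets of $\mathcal{G}\setminus\mathcal{V}_D$ to get continuity there (the coefficient/eigenfunction estimates give more, namely uniform convergence on all of $\mathcal{G}$), so one could alternatively argue more softly; but the global estimate is cleaner and will be reused later, e.g. for the $L^1$ convergence defining $T_\alpha(\mathcal{G})$ and in the surgery arguments. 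Finally, one should remark that for $\alpha \le \tfrac12$ this argument breaks down exactly at the divergence of $\sum n^{-(4\alpha-1)}$, consistent with the loss of vertex traces noted in the introduction.
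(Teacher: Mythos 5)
Your proposal is correct, and in fact it establishes the stated threshold $\alpha>\tfrac12$ whereas the paper's own proof does not. The paper bounds the eigenfunctions by $\|\varphi_n\|_{L^\infty}\le C'\sqrt{\lambda_n}$ (from the crude Sobolev estimate $\|u\|_\infty\lesssim \|u\|_{L^2}+\|u'\|_{L^2}$) and the coefficients by the pointwise Cauchy--Schwarz bound $|\langle 1,\varphi_n\rangle|\le |\mathcal G|^{1/2}$; combining these it obtains the majorant $K\lambda_n^{1/2-\alpha}$ for the $n$-th term, and then asserts that $\sum_n \lambda_n^{1/2-\alpha}$ converges for $\alpha>\tfrac12$ via Weyl's law $\lambda_n\sim n^2$. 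But $\lambda_n^{1/2-\alpha}\sim n^{1-2\alpha}$, and $\sum_n n^{1-2\alpha}$ converges only when $1-2\alpha<-1$, i.e.\ $\alpha>1$, so the paper's majorant argument as written actually misses the asserted range $(\tfrac12,1]$ entirely.

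Your route repairs exactly the two sources of loss: the multiplicative Gagliardo--Nirenberg interpolation $\|u\|_{L^\infty(e)}^2\lesssim \|u\|_{L^2(e)}\|u'\|_{L^2(e)}+\|u\|_{L^2(e)}^2$ sharpens the eigenfunction bound to $\|\varphi_n\|_{L^\infty(\mathcal G)}\lesssim \lambda_n^{1/4}$ (a gain of a factor $\lambda_n^{1/4}$), and replacing the uniform coefficient bound by Parseval's identity $\sum_n|\langle 1,\varphi_n\rangle|^2=|\mathcal G|$ plus a Cauchy--Schwarz estimate on the tail exploits the $\ell^2$-summability of the coefficients (effectively gaining almost another factor of $n^{1/2}$). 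After Weyl's law, the tail is controlled by $\big(\sum_{n\ge N} n^{-(4\alpha-1)}\big)^{1/2}$, which converges precisely when $\alpha>\tfrac12$. This is a genuinely different and tighter argument than the one in the paper, and it is the one that actually delivers the lemma as stated; the remaining details you flag (uniformity of the interpolation constant over finitely many edges, assembling edgewise $L^\infty$ bounds into a graph-wide one, continuity of each $\varphi_n$) are all routine and correctly handled. You might note in a final write-up that what you prove is stronger than the lemma claims: you get absolute and uniform convergence on all of $\mathcal{G}$, not merely on compact subsets of $\mathcal{G}\setminus\mathcal{V}_D$.
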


\begin{proof}
Let $\{\varphi_n\}$ be the $L^{2}$–orthonormal eigenfunctions of the Laplacian on the compact metric graph $\mathcal G$, with eigenvalues $\lambda_n>0$:
\[
-\,\varphi_n''=\lambda_n \varphi_n \quad\text{on each edge},\qquad 
\|\varphi_n\|_{L^{2}(\mathcal G)}=1.
\]
Multiplying the eigenvalue equation by $\varphi_n$ and integrating over the whole graph,
\[
\int_{\mathcal G} |\varphi_n'(x)|^{2}\,dx
  = \lambda_n \int_{\mathcal G} |\varphi_n(x)|^{2}\,dx
  = \lambda_n,
\]
so
\[
\|\varphi_n'\|_{L^{2}(\mathcal G)} = \sqrt{\lambda_n}.
\]
For any edge $e$ of the graph (a finite interval), the one–dimensional Sobolev inequality gives (see, e.g. \cite[§5.6]{Evans})
\[
\|u\|_{L^{\infty}(e)}
 \le C_{e}\bigl(\|u\|_{L^{2}(e)} + \|u'\|_{L^{2}(e)}\bigr)
 \qquad \forall\, u \in H^{1}(e).
\]
Because $\mathcal G$ has finitely many edges of bounded length, there exists a constant
$C>0$ independent of $e$ such that
\[
\|u\|_{L^{\infty}(\mathcal G)}
 \le C\bigl(\|u\|_{L^{2}(\mathcal G)} + \|u'\|_{L^{2}(\mathcal G)}\bigr).
\]
Applying this to $u=\varphi_n$ and using $\|\varphi_n\|_{L^{2}}=1$ and
$\|\varphi_n'\|_{L^{2}}=\sqrt{\lambda_n}$ yields
\[
\|\varphi_n\|_{L^{\infty}(\mathcal G)}
 \le C\bigl(1+\sqrt{\lambda_n}\bigr)
 \le C'\sqrt{\lambda_n},
\]
where $C'$ absorbs the case of small $\lambda_n$.
Since $|\langle 1,\varphi_n\rangle| \le |\mathcal G|^{1/2}$,
\[
\left| \frac{\langle 1,\varphi_n\rangle}{\lambda_n^\alpha}\,\varphi_n(x) \right|
   \le K\,\lambda_n^{\tfrac12-\alpha}.
\]
Weyl’s law for compact metric graphs gives $\lambda_n\sim n^{2}$, so
$\sum_{n} \lambda_n^{1/2-\alpha}$ converges whenever $\alpha>\tfrac12$.
By the Weierstrass $M$–test the series
\[
\sum_{n=1}^{\infty} \frac{\langle 1,\varphi_n\rangle}{\lambda_n^\alpha}\,\varphi_n(x)
\]
converges uniformly on every compact subset of $\mathcal G\setminus\mathcal V_D$,
and therefore defines a continuous function
\(u_\alpha \in C(\mathcal G\setminus\mathcal V_D)\).
\end{proof}
Subsequently, we will prove that the fractional torsion function is strictly positive almost everywhere on the interior of the graph, highlighting its role as a positivity-improving solution to the Poisson-type problem.
	\begin{theorem}
Let $\alpha \in (0,1)$.		
The fractional torsion function $u_{\alpha}$ satisfies
		\[
		u_{\alpha}(x) > 0 \quad \text{for almost every } x \in \mathcal{G} \setminus \mathcal{V}_D.
		\]
	\end{theorem}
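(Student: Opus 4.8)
The plan is to represent the fractional torsion function through the heat semigroup by means of Balakrishnan's subordination formula, and then to exploit the positivity-improving character of the heat semigroup on a connected metric graph.

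First I would record the subordination identity. Since $-\Delta_{\mathcal{G}}$ is self-adjoint, positive, and has strictly positive bottom of the spectrum $\lambda_1>0$, the scalar formula $\lambda^{-\alpha}=\Gamma(\alpha)^{-1}\int_0^\infty t^{\alpha-1}e^{-t\lambda}\,dt$ (valid for $\lambda>0$ and $\alpha\in(0,1)$) lifts through the spectral calculus to
\[
(-\Delta_{\mathcal{G}})^{-\alpha}=\frac{1}{\Gamma(\alpha)}\int_0^\infty t^{\alpha-1}\,e^{t\Delta_{\mathcal{G}}}\,dt,
\]
the integral converging in the operator norm on $L^2(\mathcal{G})$ because $\|e^{t\Delta_{\mathcal{G}}}\|\le e^{-\lambda_1 t}$ controls the tail $t\to\infty$ while $t^{\alpha-1}$ is integrable near $0$; the interchange with the eigenfunction expansion \eqref{eq:torsion_series} is justified by dominated convergence. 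Applying this to the constant $1\in L^2(\mathcal{G})$ gives, as a Bochner integral in $L^2(\mathcal{G})$,
\[
u_\alpha=(-\Delta_{\mathcal{G}})^{-\alpha}1=\frac{1}{\Gamma(\alpha)}\int_0^\infty t^{\alpha-1}\big(e^{t\Delta_{\mathcal{G}}}1\big)\,dt.
\]

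Next I would invoke positivity of the heat flow. The semigroup $(e^{t\Delta_{\mathcal{G}}})_{t\ge0}$ is the one generated by minus the form $f\mapsto\int_{\mathcal{G}}|f'|^2$ on the functions vanishing at $\mathcal{V}_D$ and continuous at $\mathcal{V}\setminus\mathcal{V}_D$; by the Beurling--Deny criteria it is positive, and on each connected component of $\mathcal{G}\setminus\mathcal{V}_D$ it is irreducible, hence positivity improving (see, e.g., \cite{mug3}). Since $1$ is nonnegative and does not vanish identically on any such component, $\big(e^{t\Delta_{\mathcal{G}}}1\big)(x)>0$ for a.e.\ $x\in\mathcal{G}\setminus\mathcal{V}_D$ and every $t>0$; in particular the integrand above is nonnegative, so $u_\alpha\ge0$ a.e. To upgrade this to strict positivity, I would argue by contradiction: if $Z:=\{x\in\mathcal{G}\setminus\mathcal{V}_D:u_\alpha(x)=0\}$ had positive Lebesgue measure, then integrating the representation of $u_\alpha$ over $Z$ and applying Tonelli's theorem (the integrand being nonnegative) yields
\[
0=\int_Z u_\alpha(x)\,dx=\frac{1}{\Gamma(\alpha)}\int_0^\infty t^{\alpha-1}\left(\int_Z\big(e^{t\Delta_{\mathcal{G}}}1\big)(x)\,dx\right)dt,
\]
whereas for every $t>0$ the inner integral is strictly positive (because $e^{t\Delta_{\mathcal{G}}}1>0$ a.e.\ and $|Z|>0$) and $t^{\alpha-1}>0$, forcing the right-hand side to be strictly positive --- a contradiction. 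Hence $|Z|=0$ and $u_\alpha>0$ a.e.\ on $\mathcal{G}\setminus\mathcal{V}_D$.

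The step I expect to be the main obstacle is the positivity-improving statement when a Dirichlet vertex happens to be a cut vertex: then $\mathcal{G}\setminus\mathcal{V}_D$ genuinely disconnects and irreducibility must be applied separately on each connected component, which is exactly why the conclusion is phrased relative to $\mathcal{G}\setminus\mathcal{V}_D$ rather than on all of $\mathcal{G}$. The remaining ingredients --- convergence of the subordination integral, its identification with the spectrally defined $(-\Delta_{\mathcal{G}})^{-\alpha}$, and the Tonelli interchange --- are routine.
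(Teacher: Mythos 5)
Your proof shares with the paper the central ingredient --- the Balakrishnan subordination identity
\[
u_\alpha=(-\Delta_{\mathcal{G}})^{-\alpha}1=\frac{1}{\Gamma(\alpha)}\int_0^\infty t^{\alpha-1}\,e^{t\Delta_{\mathcal{G}}}1\,dt
\]
--- but the way you extract \emph{strict} positivity from this representation is genuinely different, and in fact more robust. The paper invokes Gl\"uck's notion of individually eventually strongly positive semigroups to claim a lower bound of the form $e^{t\Delta_{\mathcal{G}}}1\ge\varepsilon$ a.e.\ for all $t>t_0$ with a single $\varepsilon>0$, and then estimates the tail as $\int_{t_0}^\infty t^{\alpha-1}e^{t\Delta_{\mathcal{G}}}1\,dt\ge\varepsilon\bigl(\int_{t_0}^\infty t^{\alpha-1}dt\bigr)\cdot 1=C_\varepsilon\cdot 1$. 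This runs into two problems: a time-uniform $\varepsilon$ is impossible because $\|e^{t\Delta_{\mathcal{G}}}1\|_{L^2}\le e^{-\lambda_1 t}|\mathcal{G}|^{1/2}\to 0$ (the eventual strong positivity results only give a $t$-dependent lower bound $\varepsilon_t$), and even setting that aside, for $\alpha\in(0,1)$ the tail integral $\int_{t_0}^\infty t^{\alpha-1}\,dt$ diverges, so the claimed constant $C_\varepsilon$ is not finite. You sidestep both issues by using only that the Dirichlet form is irreducible, hence the semigroup is positivity improving for every fixed $t>0$, and then upgrading nonnegativity of $u_\alpha$ to a.e.\ strict positivity via a Tonelli argument on the hypothetical null set $Z=\{u_\alpha=0\}$. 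That argument is clean and self-contained; the only gap you should close is the one you already flagged implicitly --- you need a jointly measurable $(t,x)$-representative of $e^{t\Delta_{\mathcal{G}}}1$ to invoke Tonelli, which is standard for strongly continuous positive semigroups. Your remark on cut vertices in $\mathcal{V}_D$ disconnecting $\mathcal{G}\setminus\mathcal{V}_D$, forcing you to apply irreducibility componentwise, is a point the paper does not address and is worth keeping. In short: same subordination skeleton, but your positivity argument is a more elementary replacement for the paper's appeal to eventual strong positivity and avoids the divergent-tail issue in the paper's estimate.
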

	
\begin{proof}
Consider the heat semigroup $(e^{t\Delta_{\mathcal{G}}})_{t\geq 0}$ on $L^2(\mathcal{G})$ with Dirichlet conditions. Its generator is $\Delta_{\mathcal{G}}$, which is self-adjoint and has spectrum $\{-\lambda_n\}_{n=1}^\infty$ where $\lambda_n > 0$ are the eigenvalues of $-\Delta_{\mathcal{G}}$.
Let $u \equiv 1$ be the constant function. Since $\mathcal{G}$ has finite measure, $u$ is a quasi-interior point of $L^2(\mathcal{G})_+$. The spectral bound is $ -\lambda_1$, and the associated spectral projection $P$ satisfies
\[
Pu = \langle 1, \varphi_1\rangle \varphi_1 \neq 0,
\]
because $\varphi_1 > 0$ on $\mathcal{G}\setminus\mathcal{V}_D$ and $1 > 0$.
By Theorem~10.2.1 of \cite{GluckThesis}, the semigroup $(e^{t\Delta_{\mathcal{G}}})_{t\geq 0}$ is individually eventually strongly positive with respect to $u=1$. That is, there exists $t_0 > 0$ such that for all $t > t_0$,
\[
e^{t\Delta_{\mathcal{G}}}1 \geq \varepsilon \quad \text{almost everywhere on } \mathcal{G}\setminus\mathcal{V}_D
\]
for some $\varepsilon > 0$.
Now using the integral representation for the fractional Laplacian,
\[
u_\alpha = (-\Delta_{\mathcal{G}})^{-\alpha}1 = \frac{1}{\Gamma(\alpha)} \int_0^\infty t^{\alpha-1} e^{t\Delta_{\mathcal{G}}}1 \, dt.
\]
Split the integral:
\[
u_\alpha = \frac{1}{\Gamma(\alpha)} \left( \int_0^{t_0} t^{\alpha-1} e^{t\Delta_{\mathcal{G}}}1 \, dt + \int_{t_0}^\infty t^{\alpha-1} e^{t\Delta_{\mathcal{G}}}1 \, dt \right).
\]
The first term is nonnegative since $e^{t\Delta_{\mathcal{G}}}$ is positivity preserving. For the second term we have
\[
\int_{t_0}^\infty t^{\alpha-1} e^{t\Delta_{\mathcal{G}}}1 \, dt \;\geq\; \varepsilon \int_{t_0}^\infty t^{\alpha-1} dt \cdot 1 \;=\; C_\varepsilon \, 1,
\]
where $C_\varepsilon > 0$. Therefore,
\[
u_\alpha \;\geq\; \frac{C_\varepsilon}{\Gamma(\alpha)} > 0
\]
almost everywhere on $\mathcal{G}\setminus\mathcal{V}_D$.
\end{proof}

		\subsection*{Fractional Torsional Rigidity}
		
		Having established the existence, uniqueness, and positivity of the fractional torsion function, we now introduce the concept of \emph{fractional torsional rigidity} of a metric graph. Intuitively, the torsional rigidity measures the “total flexibility” of the graph under a unit load and is defined as the total mass of the torsion function.
		
		\begin{definition}
			Let $u$ be the fractional torsion function on $\mathcal{G}$. The \emph{fractional torsional rigidity} is defined by
			\[
			T_{\alpha}(\mathcal{G}) := \int_{\mathcal{G}} u_{\alpha}(x) \, dx.
			\]
		\end{definition}
		Using the spectral representation \eqref{eq:torsion_series} for $u$, we immediately obtain
		\begin{align}\label{eq:torsional_rigidity_series}
			T_{\alpha}(\mathcal{G}) 
			&= \int_{\mathcal{G}} \sum_{n=1}^\infty \frac{\langle 1, \varphi_n \rangle}{\lambda_n^\alpha} \varphi_n(x) \, dx \nonumber\\
			&= \sum_{n=1}^\infty \frac{\langle 1, \varphi_n \rangle}{\lambda_n^\alpha} \int_{\mathcal{G}} \varphi_n(x) \, dx \nonumber \\
			&= \sum_{n=1}^\infty \frac{|\langle 1, \varphi_n \rangle|^2}{\lambda_n^\alpha}.
		\end{align}
		This expression provides the \emph{spectral representation} of the fractional torsional rigidity. As a simple consequence, one obtains the following variational lower bound
\begin{equation}\label{eq:variational_lower_bound}
		T_{\alpha}(\mathcal{G}) \ge \frac{|\langle 1, \varphi_1 \rangle|^2}{\lambda_1^\alpha}.
\end{equation}

\begin{lemma}
Let $\mathcal{G}$ be a compact metric graph. Then the series \eqref{eq:torsional_rigidity_series}
converges for every $\alpha>0$, hence $T_\alpha(\mathcal G)<\infty$.
Moreover,
\[
 T_\alpha(\mathcal G)
\;\le\; \frac{|\mathcal G|}{\lambda_1^{\alpha}}.
\]
\end{lemma}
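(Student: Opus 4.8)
The plan is to compare the series \eqref{eq:torsional_rigidity_series} term by term against the Parseval identity for the constant function $1 \in L^2(\mathcal{G})$. Since $\{\varphi_n\}_{n\ge 1}$ is an orthonormal basis of $L^2(\mathcal{G})$, Parseval's identity gives
\[
\sum_{n=1}^\infty |\langle 1, \varphi_n \rangle|^2 \;=\; \|1\|_{L^2(\mathcal{G})}^2 \;=\; \int_{\mathcal{G}} 1 \, dx \;=\; |\mathcal{G}| \;<\; \infty ,
\]
where the last equality uses that the measure on $\mathcal{G}$ is the one-dimensional Lebesgue measure, so that the total mass of the graph coincides with its total length.

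First I would note that $0 < \lambda_1 \le \lambda_n$ for every $n \ge 1$, which is precisely the point where the presence of at least one Dirichlet vertex enters (it guarantees the spectral gap $\lambda_1 > 0$). Consequently, for every $\alpha > 0$ one has $\lambda_n^\alpha \ge \lambda_1^\alpha > 0$, hence $\lambda_n^{-\alpha} \le \lambda_1^{-\alpha}$, and therefore each summand of \eqref{eq:torsional_rigidity_series} satisfies
\[
0 \;\le\; \frac{|\langle 1, \varphi_n \rangle|^2}{\lambda_n^\alpha} \;\le\; \frac{|\langle 1, \varphi_n \rangle|^2}{\lambda_1^\alpha}.
\]
Summing over $n$ and invoking the Parseval identity above yields
\[
T_\alpha(\mathcal{G}) \;=\; \sum_{n=1}^\infty \frac{|\langle 1, \varphi_n \rangle|^2}{\lambda_n^\alpha} \;\le\; \frac{1}{\lambda_1^\alpha}\sum_{n=1}^\infty |\langle 1, \varphi_n \rangle|^2 \;=\; \frac{|\mathcal{G}|}{\lambda_1^\alpha} \;<\; \infty .
\]
Since all terms are nonnegative and the partial sums are bounded, this simultaneously proves convergence of \eqref{eq:torsional_rigidity_series} and the stated upper bound.

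There is no genuine obstacle here; the only points worth a line of care are that the identity \eqref{eq:torsional_rigidity_series} itself is legitimate — it follows from the $L^2$-convergence of the spectral series \eqref{eq:torsion_series} for $u_\alpha$ together with continuity of the inner product, the $L^2$-convergence in turn being guaranteed by the same estimate $\sum_n \lambda_n^{-2\alpha}|\langle 1,\varphi_n\rangle|^2 \le \lambda_1^{-2\alpha}|\mathcal{G}| < \infty$ — and that the bound is only meaningful because $\lambda_1 > 0$. An equivalent and slightly more conceptual phrasing would be to use the operator inequality $\big((-\Delta_{\mathcal{G}})^\alpha\big)^{-1} \le \lambda_1^{-\alpha}\,\mathrm{Id}$ on $L^2(\mathcal{G})$ and pair both sides with $1$, but the term-by-term comparison above is the most transparent route and I would present that one.
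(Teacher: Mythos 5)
Your proposal is correct and takes essentially the same route as the paper: compare each term of the series against $\lambda_1^{-\alpha}|\langle 1,\varphi_n\rangle|^2$ using $\lambda_n\ge\lambda_1>0$, then sum and invoke $\sum_n|\langle 1,\varphi_n\rangle|^2=\|1\|_{L^2}^2=|\mathcal G|$. The only cosmetic difference is that the paper cites ``Bessel's inequality'' where you (more precisely, since it is an equality) invoke Parseval; your extra remark on why interchanging sum and integral is legitimate is a harmless refinement.
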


\begin{proof}
By Bessel’s inequality,
$\sum_{n=1}^\infty |\langle 1,\varphi_n\rangle|^2 = \|1\|_{L^2(\mathcal G)}^2 = |\mathcal G|$.
Since $\lambda_n\ge \lambda_1>0$ for all $n$,
\[
\sum_{n=1}^\infty \frac{|\langle 1,\varphi_n\rangle|^2}{\lambda_n^\alpha}
\le
\frac{1}{\lambda_1^\alpha}\sum_{n=1}^\infty |\langle 1,\varphi_n\rangle|^2
= \frac{|\mathcal G|}{\lambda_1^\alpha}<\infty. \qedhere
\]
\end{proof}
\section{Fundamental Examples}\label{section: fundamental examples}

In this section, we illustrate the theory developed above by computing the fractional torsion function and the corresponding fractional torsional rigidity for several fundamental graphs. These examples include the interval and flower graphs, all of which allow for explicit spectral calculations. For each case, the fractional torsion function is obtained via its spectral representation, and the fractional torsional rigidity is computed by integrating the torsion function. Notably, in all examples, the formulas are consistent with the classical case $\alpha = 1$, recovering the standard torsion function and torsional rigidity. These computations demonstrate both the effectiveness of the spectral approach and the influence of the fractional exponent $\alpha$ on the torsional properties of the graph.

We start with the simplest case, the interval, which allows for fully explicit calculations.
\begin{example}\label{ex: interval}
	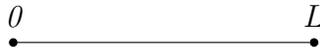
\begin{figure}\label{interval graph}
		\centering
		\begin{tikzpicture}
		\node[draw, circle, inner sep=1pt, fill, label=above:0] (A) at (0, 0) {};
		\node[draw, circle, inner sep=1pt, fill, label=above:$L$] (B) at (4, 0) {};
		\draw[ ] (A) -- (B);
	\end{tikzpicture}
	\caption{The interval graph of length $L$, serving as the simplest one-dimensional domain where the fractional torsion function and torsional rigidity can be computed explicitly.}
	\end{figure}

	Consider the interval graph (see Figure \ref{interval graph}) $\mathcal{G} = [0,L]$ with boundary conditions
	\[
	u_\alpha(0) = 0, \quad u_\alpha'(L) = 0.
	\]
	The eigenvalues and orthonormal eigenfunctions of $-\Delta_{[0,L]}$ under these boundary conditions are
	\[
	\lambda_n = \left(\frac{(2n-1)\pi}{2L}\right)^2, \quad
	\varphi_n(x) = \sqrt{\frac{2}{L}} \sin\left(\frac{(2n-1)\pi x}{2L}\right), \quad n \ge 1.
	\]
Using the spectral decomposition \eqref{eq:torsion_series}of the torsion function 
	with Fourier coefficients
	\[
	\langle 1, \varphi_n \rangle = \int_0^L \sqrt{\frac{2}{L}} \sin\left(\frac{(2n-1)\pi x}{2L}\right) dx
	= \frac{2\sqrt{2L}}{(2n-1)\pi},
	\]
 we obtain the explicit series representation of the torsion function
	\begin{equation}\label{torsion function of interval graph}
	    u_\alpha(x) = \sum_{n=1}^\infty \frac{4 (2L)^{2\alpha}}{(2n-1)^{1+2\alpha} \pi^{1+2\alpha}} \sin\left(\frac{(2n-1)\pi x}{2L}\right).
	\end{equation}
	The corresponding fractional torsional rigidity is
	\[
	T_\alpha(\mathcal{G},\{0\}) = \int_0^L u_\alpha(x)\, dx
	= \sum_{n=1}^\infty \frac{8 \cdot 2^{2\alpha} L^{2\alpha+1}}{(2n-1)^{2+2\alpha} \pi^{2+2\alpha}}.
	\]
	Using the standard identity for the Riemann zeta function,
	\begin{equation}\label{eq: Riemann zeta}
	    \sum_{n=1}^\infty \frac{1}{(2n-1)^s} = \left(1 - 2^{-s}\right) \zeta(s),\quad \zeta(s) := \sum_{n=1}^\infty \frac{1}{n^s}, \qquad \Re(s) > 1.
	\end{equation}
	we can write
	\begin{equation}\label{torsional rigidity of interval graph}
	    	T_\alpha(\mathcal{G},\{0\}) = \frac{8 \cdot 2^{2\alpha} L^{2\alpha+1}}{\pi^{2+2\alpha}} \left(1 - 2^{-(2+2\alpha)}\right) \zeta(2+2\alpha).
	\end{equation}
	In the classical case $\alpha = 1$, this reduces to
	\[
	T_1(\mathcal{G},\{0\}) = \frac{L^3}{3},
	\]
	which coincides with the well-known torsional rigidity for an interval with Dirichlet–Neumann boundary conditions. This illustrates the consistency of the fractional definition with the classical Laplacian (see \cite{MugnoloPlumer2023}).
\end{example}

\begin{example}\label{ex: flower}
	\begin{figure}[h] \label{fig: flower}
		\centering
			\begin{tikzpicture}
				\coordinate (v1) at (60*1:2);
				\coordinate (v2) at (60*2:2);
				\coordinate (v3) at (60*3:2);
				\coordinate (v4) at (60*4:2);
				\coordinate (v5) at (60*5:2);
				\coordinate (v6) at (60*6:2);
				\foreach \i in {1,2,3,4,5,6} {
					\draw[line width=.7pt] (0,0) to [out=60*\i-30,in=60*\i+270] (v\i);
					\draw[line width=.7pt] (0,0) to [out=60*\i+30,in=60*\i+90] (v\i);
				}
				\draw[fill=black, line width=1pt] (0,0) circle (2pt);
			\end{tikzpicture}
	\caption{The flower graph, consisting of several loops attached to a common vertex. This structure highlights how cycles influence the behavior of the fractional torsion function.}
	\end{figure}
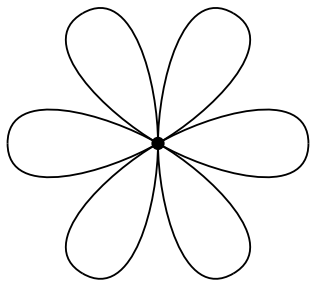
	Let $\mathcal{F}$ be an equilateral flower graph (see Figure \ref{fig: flower}) with $N$ edges of length $L>0$:
	\[
	\mathcal{F} = \bigcup_{j=1}^N e_j,
	\]
	with Dirichlet boundary conditions at all vertices:
	\[
	u = 0 \quad \text{at all vertices of } \mathcal{F}.
	\]
	Each edge can be treated as an interval $[0,L]$ with Dirichlet conditions at both ends. The  eigenpairs of the Laplacian $-\Delta_{\mathcal{V}}$ are
	\[
	\lambda_n = \left(\frac{n\pi}{L}\right)^2, \quad \varphi_n(x) = \sqrt{\frac{2}{L}} \sin\left(\frac{n\pi x}{L}\right), \quad n\ge 1,
	\]
	and on $\mathcal{F}$ the eigenfunctions supported on $e_j$ are
	\[
	\varphi_{j,n}(x) =
	\begin{cases} 
		\sqrt{\frac{2}{L}}\sin\left(\frac{n\pi x}{L}\right), & x\in e_j,\\
		0, & \text{otherwise}.
	\end{cases}
	\]
	Computing the inner product
	\[
	\langle \varphi_{j,n},1\rangle =
	\begin{cases} 
		\dfrac{2\sqrt{2L}}{n\pi}, & n \text{ odd}, \\[1mm]
		0, & n \text{ even}.
	\end{cases}
	\]
    Using the spectral decomposition \eqref{eq:torsion_series}, the torsion function on each edge is \[ u_{\alpha}(x)=\sum_{n \text{ odd }} \frac{4L^{2\alpha}}{(n\pi)^{1+2\alpha}}\sin\Big( \frac{n\pi x}{L} \Big).\]
	Similarly, the torsional rigidity is computed by using \eqref{eq:torsional_rigidity_series} and the series identity for the Riemann zeta function \eqref{eq: Riemann zeta} 
	\[
	T_\alpha(\mathcal{F},\mathcal{V}) = N \cdot 8 L^{1+2\alpha} \pi^{-2(1+\alpha)} \left(1-2^{-2(1+\alpha)}\right) \zeta\big(2(1+\alpha)\big),
	\]
	and for $\alpha=1$:
	\[
	T_1(\mathcal{F},\mathcal{V})  = \frac{N L^3}{12} = \frac{|\mathcal{G}|^3}{12 N^2},
	\]
    which is the torsional rigidity of the flower graph in \cite{MugnoloPlumer2023}.
\end{example}

\section{Variational Characterization} \label{section: variational char}
\noindent
A central feature of fractional Laplacians on metric graphs is their close connection to variational principles. 
In analogy with the classical case $\alpha=1$, where torsional rigidity can be expressed as the maximum of a quadratic functional (see \cite{brasco, MugnoloPlumer2023, OzcanTaeufer2024}),  
the fractional torsional rigidity admits a similar characterization in terms of the energy associated with the space $H^\alpha_0(\mathcal{G})$. 
This perspective is particularly useful, since it not only establishes existence and uniqueness of the torsion function, 
but also provides a natural framework for deriving estimates and comparison principles. 
In what follows, we introduce the relevant fractional Sobolev spaces, show that they are well-defined Hilbert spaces on $\mathcal{G}$, 
and then prove that torsional rigidity can be described as the supremum of a suitable concave functional. 

We define the fractional Sobolev space
\begin{equation}\label{set:sobolevspace}
    H_0^\alpha(\mathcal{G}) := \left\{ u \in L^2(\mathcal{G}) \;\middle|\; \sum_{n=1}^\infty \lambda_n^{\alpha} |\langle u, \varphi_n \rangle|^2 < \infty \text{ and } u \text{ vanishes on } \mathcal{V}_D \right\},
\end{equation}
where $\{\lambda_n\}_{n=1}^\infty$ and $\{\varphi_n\}_{n=1}^\infty$ denote the eigenvalues and $L^2$-orthonormal eigenfunctions of the Laplacian on $\mathcal{G}$ with Dirichlet vertex conditions on $\mathcal{V}_D$, Neumann elsewhere.
For any \(\alpha \in (0,1) \), define the fractional Sobolev norm as
\[
\|u\|_{H^\alpha_0(\mathcal{G})}^2 := \sum_{n=1}^\infty \lambda_n^\alpha |\langle u, \varphi_n \rangle|^2.
\]
\begin{remark}
The above definition of $H_0^\alpha(\mathcal{G})$ is understood in the spectral sense,
that is, through the eigenfunction expansion of the Laplacian with Dirichlet conditions
on $\mathcal{V}_D$. For $\alpha>\tfrac12$, this coincides with the usual Sobolev
space interpretation, since vertex traces are well defined and the condition
$u|_{\mathcal{V}_D}=0$ can be imposed in the standard way. When $\alpha\le\tfrac12$,
however, traces are not defined, and the spectral formulation provides a natural
extension that remains consistent for all $\alpha\in(0,1)$.
\end{remark}
Before presenting the variational characterization of torsional rigidity, 
we first establish that the fractional Sobolev space $H_0^\alpha(\mathcal{G})$ is a well-defined Hilbert space. 
In particular, the following proposition shows that the spectral definition of the fractional Sobolev norm indeed satisfies all the properties of a norm 
and provides a useful bound in terms of the $L^2$-norm.
\begin{proposition}\label{prop:Halpha_norm}
Let $\alpha \in (0,1)$ and let $H_0^\alpha(\mathcal{G})$ be defined as in \eqref{set:sobolevspace}.
The map
\[
\|u\|_{H_0^\alpha(\mathcal{G})} := \Biggl( \sum_{n=1}^\infty \lambda_n^\alpha |\langle u, \varphi_n \rangle|^2 \Biggr)^{1/2}
\]
defines a norm on $H_0^\alpha(\mathcal{G})$. Moreover, it dominates the $L^2$-norm in the sense that
\begin{equation}\label{eqn:normrelation_proposition}
\|u\|_{L^2(\mathcal{G})} \leq \lambda_1^{-\alpha} \|u\|_{H_0^\alpha(\mathcal{G})}.
\end{equation}
\end{proposition}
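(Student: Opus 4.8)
The plan is to verify the three norm axioms directly from the series definition and then prove the domination inequality by a term-by-term comparison. First I would observe that $\|u\|_{H_0^\alpha(\mathcal{G})}$ is well-defined (i.e.\ finite) precisely for $u \in H_0^\alpha(\mathcal{G})$, by the very definition \eqref{set:sobolevspace} of the space, so the map takes values in $[0,\infty)$. For absolute homogeneity, note that $\langle cu, \varphi_n\rangle = c\langle u,\varphi_n\rangle$, hence $\|cu\|_{H_0^\alpha(\mathcal{G})}^2 = |c|^2 \sum_n \lambda_n^\alpha |\langle u,\varphi_n\rangle|^2 = |c|^2 \|u\|_{H_0^\alpha(\mathcal{G})}^2$. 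For the triangle inequality, the cleanest route is to recognize that $\|\cdot\|_{H_0^\alpha(\mathcal{G})}$ is the norm induced by the inner product $\langle u, v\rangle_{H_0^\alpha} := \sum_{n=1}^\infty \lambda_n^\alpha \langle u,\varphi_n\rangle \overline{\langle v,\varphi_n\rangle}$, which converges absolutely by Cauchy--Schwarz applied to the sequences $(\lambda_n^{\alpha/2}\langle u,\varphi_n\rangle)_n$ and $(\lambda_n^{\alpha/2}\langle v,\varphi_n\rangle)_n$ in $\ell^2$; sesquilinearity, conjugate symmetry and positivity are then immediate, so the triangle inequality follows from the general fact that an inner-product-induced seminorm satisfies it.

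For positive definiteness, suppose $\|u\|_{H_0^\alpha(\mathcal{G})} = 0$. Since every $\lambda_n > 0$ (the spectrum of $-\Delta_{\mathcal{G}}$ is strictly positive, as recorded earlier), each term $\lambda_n^\alpha |\langle u,\varphi_n\rangle|^2$ vanishes, forcing $\langle u,\varphi_n\rangle = 0$ for all $n$. Because $\{\varphi_n\}$ is an orthonormal basis of $L^2(\mathcal{G})$, this gives $u = 0$ in $L^2(\mathcal{G})$, as required.

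For the domination inequality \eqref{eqn:normrelation_proposition}, I would use $\lambda_n \geq \lambda_1 > 0$ for every $n$, so $1 \leq \lambda_1^{-\alpha}\lambda_n^\alpha$ (here $\alpha > 0$ and $\lambda_1^{-\alpha}$ makes sense since $\lambda_1 > 0$), and therefore, via Parseval's identity,
\[
\|u\|_{L^2(\mathcal{G})}^2 = \sum_{n=1}^\infty |\langle u,\varphi_n\rangle|^2 \leq \lambda_1^{-\alpha}\sum_{n=1}^\infty \lambda_n^\alpha |\langle u,\varphi_n\rangle|^2 = \lambda_1^{-\alpha}\|u\|_{H_0^\alpha(\mathcal{G})}^2.
\]
Taking square roots yields $\|u\|_{L^2(\mathcal{G})} \leq \lambda_1^{-\alpha/2}\|u\|_{H_0^\alpha(\mathcal{G})}$; I note in passing that the exponent of $\lambda_1$ appearing in the proposition's statement should presumably be $-\alpha/2$ rather than $-\alpha$ for dimensional consistency, but in either formulation the argument is the same comparison. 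There is no serious obstacle here: the only point requiring a little care is the absolute convergence of the inner-product series needed to justify treating $\|\cdot\|_{H_0^\alpha(\mathcal{G})}$ as genuinely coming from an inner product (rather than proving the triangle inequality by hand via Minkowski in a weighted $\ell^2$ space), and that is dispatched by a single application of Cauchy--Schwarz as indicated above.
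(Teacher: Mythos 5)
Your proof is correct and follows essentially the same route as the paper's: homogeneity and positive definiteness from the spectral series, the triangle inequality via the weighted-$\ell^2$ structure (the paper maps $u\mapsto\{\lambda_n^{\alpha/2}\langle u,\varphi_n\rangle\}\in\ell^2$; you phrase it as an inner product, which is the same idea), and the $L^2$-domination via Parseval and $\lambda_n\geq\lambda_1$. You are also right to flag the exponent: the argument only yields $\|u\|_{L^2(\mathcal{G})}^2\leq\lambda_1^{-\alpha}\|u\|_{H_0^\alpha(\mathcal{G})}^2$, so after taking square roots the constant should be $\lambda_1^{-\alpha/2}$ rather than $\lambda_1^{-\alpha}$; the unsquared inequality as displayed in the proposition appears to be a typo, while the squared version stated in the introduction is the one actually proved.
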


\begin{proof}
Positivity and definiteness follow immediately from the fact that $\lambda_n^\alpha > 0$ and the orthonormality of $\{\varphi_n\}$.  
For homogeneity, let $\lambda \in \mathbb{R}$; then
\[
\|\lambda u\|_{H_0^\alpha}^2 = \sum_{n=1}^\infty \lambda_n^\alpha |\langle \lambda u, \varphi_n \rangle|^2 = |\lambda|^2 \sum_{n=1}^\infty \lambda_n^\alpha |\langle u, \varphi_n \rangle|^2 = |\lambda|^2 \|u\|_{H_0^\alpha}^2.
\]
For the triangle inequality, consider the linear map
\[
u \mapsto \bigl\{ \lambda_n^{\alpha/2} \langle u, \varphi_n \rangle \bigr\}_{n=1}^\infty \in \ell^2.
\]
The $\ell^2$ norm satisfies the triangle inequality, which immediately implies
\[
\|u+v\|_{H_0^\alpha} \leq \|u\|_{H_0^\alpha} + \|v\|_{H_0^\alpha}, \quad \forall u,v \in H_0^\alpha(\mathcal{G}).
\]
Finally, the equivalence with the $L^2$-norm follows from Parseval's identity:
\[
\|u\|_{L^2(\mathcal{G})}^2 = \sum_{n=1}^\infty |\langle u, \varphi_n \rangle|^2 \leq \lambda_1^{-\alpha} \sum_{n=1}^\infty \lambda_n^\alpha |\langle u, \varphi_n \rangle|^2 = \lambda_1^{-\alpha} \|u\|_{H_0^\alpha}^2.
\]
This completes the proof.
\end{proof}
\begin{remark}\label{rem:norm_operator}
For any $v \in H_0^\alpha(\mathcal{G})$, the homogeneous Sobolev norm can be expressed in terms of the fractional Laplacian:
\begin{equation}\label{relationshipbetweeninnerandnorm}
  \|v\|_{H_0^\alpha(\mathcal{G})}^2 = \sum_{n=1}^\infty \lambda_n^\alpha |\langle v, \varphi_n \rangle|^2 = \langle v, (-\Delta_{\mathcal{G}})^\alpha v \rangle.  
\end{equation}
This follows directly from the spectral definition of $(-\Delta_{\mathcal{G}})^\alpha$, since $v$ can be expanded in the eigenbasis $\{\varphi_n\}$ of the Laplacian. 
\end{remark}
With the norm on $H_0^\alpha(\mathcal{G})$ well-defined and its connection to the fractional Laplacian clarified (see Remark~\ref{rem:norm_operator}), 
we are now ready to formulate the torsional rigidity in a variational framework. 
This perspective not only guarantees the existence and uniqueness of the torsion function, 
but also provides a convenient functional setting to derive estimates and extremal properties.
\begin{theorem}
	Let $\alpha \in (0,1) $ and let $u_\alpha \in H_0^\alpha(\mathcal{G})$ be the fractional torsion function.
	Then the fractional torsional rigidity
	satisfies the variational identity
	\begin{equation}\label{eqn:variationalcharacterization}
	T_{\alpha}(\mathcal{G})= \sup_{f \in H_0^\alpha(\mathcal{G})}\frac{\left( \int_{\mathcal{G}} f(x)\, dx\right) ^2}{\langle f, (-\Delta_{\mathcal{G}})^\alpha f  \rangle }=\max_{f \in H_0^\alpha(\mathcal{G})}\frac{\left( \int_{\mathcal{G}} f(x)\, dx\right) ^2}{\langle f, (-\Delta_{\mathcal{G}})^\alpha f  \rangle }.
	\end{equation}
	Moreover, the supremum is attained at constant factors of $u_\alpha$.
\end{theorem}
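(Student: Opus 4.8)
The plan is to reduce the quotient to a weighted $\ell^2$ identity and recognise it as a sharp instance of Cauchy--Schwarz, with the torsion function $u_\alpha$ as the equality case. Restricting without loss of generality to real-valued $f$ (replacing $f$ by a real multiple of its Fourier coefficients only increases the numerator while leaving the denominator fixed), fix $f \in H_0^\alpha(\mathcal{G})\setminus\{0\}$ and write $f = \sum_{n\ge 1} c_n \varphi_n$ with $c_n = \langle f,\varphi_n\rangle$, and set $b_n := \langle 1,\varphi_n\rangle$, so that $\sum_n b_n^2 = \|1\|_{L^2(\mathcal{G})}^2 = |\mathcal{G}|$ by Parseval. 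Since $L^2(\mathcal{G})\hookrightarrow L^1(\mathcal{G})$ on a space of finite measure, $\int_{\mathcal{G}} f = \langle f,1\rangle = \sum_n c_n b_n$, and by Remark~\ref{rem:norm_operator} we have $\langle f,(-\Delta_{\mathcal{G}})^\alpha f\rangle = \|f\|_{H_0^\alpha(\mathcal{G})}^2 = \sum_n \lambda_n^\alpha c_n^2$, which is strictly positive because $\lambda_n \ge \lambda_1 > 0$ and $f\ne 0$. Thus the functional to be maximised is
\[
Q(f) = \frac{\bigl(\sum_n c_n b_n\bigr)^2}{\sum_n \lambda_n^\alpha c_n^2}.
\]

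Next I would split $c_n b_n = (\lambda_n^{\alpha/2} c_n)(\lambda_n^{-\alpha/2} b_n)$ and apply Cauchy--Schwarz in $\ell^2$:
\[
\Bigl(\sum_n c_n b_n\Bigr)^2 \le \Bigl(\sum_n \lambda_n^\alpha c_n^2\Bigr)\Bigl(\sum_n \lambda_n^{-\alpha} b_n^2\Bigr).
\]
This is legitimate precisely because the second factor equals $\sum_n |\langle 1,\varphi_n\rangle|^2/\lambda_n^\alpha = T_\alpha(\mathcal{G})$, which is finite by the convergence lemma above; the same bound also gives absolute convergence of $\sum_n c_n b_n$, justifying all the manipulations. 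Dividing through yields $Q(f) \le T_\alpha(\mathcal{G})$ for every admissible $f$, hence $\sup_{f} Q(f) \le T_\alpha(\mathcal{G})$.

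To show the bound is attained — which simultaneously upgrades the supremum to a maximum — I would test $Q$ at $f = u_\alpha$. First, $u_\alpha \in H_0^\alpha(\mathcal{G})$, since $\sum_n \lambda_n^\alpha (b_n/\lambda_n^\alpha)^2 = \sum_n b_n^2/\lambda_n^\alpha = T_\alpha(\mathcal{G}) < \infty$. Then, using \eqref{eq:torsion_series} and \eqref{eq:torsional_rigidity_series} (interchanging sum and integral, justified by the $L^2$-convergence of the expansion), one gets $\int_{\mathcal{G}} u_\alpha = \sum_n b_n^2/\lambda_n^\alpha = T_\alpha(\mathcal{G})$ and $\langle u_\alpha,(-\Delta_{\mathcal{G}})^\alpha u_\alpha\rangle = \|u_\alpha\|_{H_0^\alpha(\mathcal{G})}^2 = T_\alpha(\mathcal{G})$, so $Q(u_\alpha) = T_\alpha(\mathcal{G})^2/T_\alpha(\mathcal{G}) = T_\alpha(\mathcal{G})$. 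Since $Q$ is $0$-homogeneous, every $t u_\alpha$ with $t\ne 0$ attains the same value. Finally, to identify all maximisers, I would invoke the equality case of Cauchy--Schwarz: the sequence $(\lambda_n^{-\alpha/2} b_n)_n$ is nonzero because $\sum_n b_n^2 = |\mathcal{G}|>0$, so equality forces $\lambda_n^{\alpha/2} c_n = t\,\lambda_n^{-\alpha/2} b_n$ for a single scalar $t$, i.e. $c_n = t\, b_n/\lambda_n^\alpha$ for all $n$, which is exactly $f = t u_\alpha$.

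The only technical points — absolute convergence of $\sum_n c_n b_n$, the interchange of summation and integration in computing $\int_{\mathcal{G}} u_\alpha$, and strict positivity of the denominator — are all immediate from the summability built into the definition of $H_0^\alpha(\mathcal{G})$ together with $\lambda_1 > 0$. There is no genuine analytic obstacle here: the single substantive input is the finiteness $T_\alpha(\mathcal{G}) < \infty$, which is what makes the second Cauchy--Schwarz factor (and hence the whole comparison) meaningful, so the main work is bookkeeping rather than a new idea. As an alternative presentation one could instead maximise the concave affine functional $f \mapsto 2\int_{\mathcal{G}} f - \langle f,(-\Delta_{\mathcal{G}})^\alpha f\rangle$, whose Euler--Lagrange equation is $(-\Delta_{\mathcal{G}})^\alpha f = 1$ and whose maximum value is $T_\alpha(\mathcal{G})$, and then rescale to recover the homogeneous quotient; but the direct Cauchy--Schwarz argument above is the most economical route.
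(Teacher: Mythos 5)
Your proof is correct, and it takes a genuinely different (and, I would say, more economical) route than the paper. The paper introduces the auxiliary concave functional $\mathcal{J}(f) = 2\int_{\mathcal{G}} f - \langle f, (-\Delta_{\mathcal{G}})^\alpha f\rangle$, derives the Euler--Lagrange equation $(-\Delta_{\mathcal{G}})^\alpha f = 1$ to identify the critical point, proves existence of a maximizer via anti-coercivity, evaluates $\mathcal{J}(u_\alpha) = T_\alpha(\mathcal{G})$, and finally uses a Young-inequality/scaling argument to relate $\mathcal{J}$ to the homogeneous Rayleigh-type quotient. You instead pass immediately to coordinates $c_n = \langle f,\varphi_n\rangle$, $b_n = \langle 1,\varphi_n\rangle$, write the quotient as $\bigl(\sum c_n b_n\bigr)^2 / \sum \lambda_n^\alpha c_n^2$, and recognize it as a sharp Cauchy--Schwarz instance after the split $c_n b_n = (\lambda_n^{\alpha/2} c_n)(\lambda_n^{-\alpha/2} b_n)$, the second factor contributing exactly $\sum \lambda_n^{-\alpha} b_n^2 = T_\alpha(\mathcal{G})$. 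The finiteness of $T_\alpha(\mathcal{G})$, already established in the paper, does all the work; the equality case of Cauchy--Schwarz then delivers both attainment and the complete characterization of maximizers $f = t u_\alpha$ in one stroke, which the paper's argument only establishes implicitly. Your route buys a fully self-contained, direct proof with no excursion through an auxiliary functional, existence theory, or Young's inequality, and it also makes transparent exactly why the extremizers are unique up to scalar; the paper's route is more in the spirit of classical torsion-function arguments and generalizes more naturally to settings where one has a bilinear form but no convenient diagonalizing basis.

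One small remark: your opening ``WLOG real-valued'' sentence is a bit awkwardly phrased (replacing $f$ by ``a real multiple of its Fourier coefficients'' is not a well-defined operation when the $c_n$ have varying phases relative to the $b_n$). This is harmless --- you can drop the reduction entirely, since Cauchy--Schwarz applied to $|\sum_n c_n b_n|^2 \le \bigl(\sum_n \lambda_n^\alpha |c_n|^2\bigr)\bigl(\sum_n \lambda_n^{-\alpha}|b_n|^2\bigr)$ works verbatim for complex $f$ once the numerator in the statement is read as $|\int_{\mathcal{G}} f|^2$, which is the natural interpretation --- but as written the sentence would benefit from either being removed or replaced with the standard observation that the problem is invariant under $f \mapsto \bar f$ and one may therefore restrict to real-valued functions.
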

\begin{proof}
We introduce the functional
\[
\mathcal{J}(f) := 2 \int_{\mathcal{G}} f(x)\,dx - \langle f, (-\Delta_{\mathcal{G}})^\alpha f \rangle,
\qquad f \in H_0^\alpha(\mathcal{G}),
\]
and show that its unique maximizer is the torsion function \(u_\alpha\), i.e.\ the weak solution of
\[
(-\Delta_{\mathcal{G}})^\alpha u_\alpha = 1 \quad \text{in } H_0^\alpha(\mathcal{G}).
\]
Let \(f \in H_0^\alpha(\mathcal{G})\) and consider perturbations of the form \(f+\varepsilon h\), with \(h \in H_0^\alpha(\mathcal{G})\) and \(\varepsilon \in \mathbb{R}\). Expanding,
\[
\mathcal{J}(f+\varepsilon h) 
= 2\int_{\mathcal{G}} (f+\varepsilon h)\,dx 
- \langle f+\varepsilon h, (-\Delta_{\mathcal{G}})^\alpha (f+\varepsilon h) \rangle.
\]
Differentiating at \(\varepsilon=0\), we obtain the first variation:
\[
\left.\frac{d}{d\varepsilon}\mathcal{J}(f+\varepsilon h)\right|_{\varepsilon=0}
= 2\int_{\mathcal{G}} h(x)\,dx - 2\langle h, (-\Delta_{\mathcal{G}})^\alpha f \rangle.
\]
This expression vanishes for all \(h \in H_0^\alpha(\mathcal{G})\) precisely when
\[
\int_{\mathcal{G}} h(x)\,dx = \langle h, (-\Delta_{\mathcal{G}})^\alpha f \rangle 
\qquad \forall h \in H_0^\alpha(\mathcal{G}),
\]
which, by the Riesz representation theorem, implies
\[
(-\Delta_{\mathcal{G}})^\alpha f = 1.
\]
Hence the critical point of \(\mathcal{J}\) is the torsion function \(u_\alpha\).
Using the identity
\[
\langle v, (-\Delta_{\mathcal{G}})^\alpha v \rangle = \|v\|_{H_0^\alpha(\mathcal{G})}^2,
\]
we may rewrite
\[
\mathcal{J}(v) = 2 \int_{\mathcal{G}} v(x)\,dx - \|v\|_{H_0^\alpha(\mathcal{G})}^2.
\]
The functional is thus the sum of a linear term and a strictly concave quadratic term. Strict convexity of the norm ensures that \(\mathcal{J}\) is strictly concave on \(H_0^\alpha(\mathcal{G})\). 
By Cauchy-Schwarz inequality and equation \eqref{relationshipbetweeninnerandnorm}, we have 
\begin{align*}
\mathcal{J}(v)&=2 \int_{\mathcal{G}} v(x)\, dx - \|v\|_{ {H}^\alpha_0(\mathcal{G})}^2\\ & \leq \|v\|_{L^2(\mathcal{G})}\sqrt{|\mathcal{G}|}-\|v\|_{ {H}^\alpha_0(\mathcal{G})}^2\\ &\leq\sqrt{|\mathcal{G}|} \lambda_1^{-\alpha} \|v\|_{ {H}^\alpha_0(\mathcal{G})}-\|v\|_{ {H}^\alpha_0(\mathcal{G})}^2.
\end{align*}
As \(\|v\|_{H_0^\alpha} \to \infty\), the quadratic term dominates, yielding \(\mathcal{J}(v) \to -\infty\). This property, known as anti-coercivity, guarantees that the supremum of \(\mathcal{J}\) is attained at a unique point, which we already identified as \(u_\alpha\).
For the maximizer \(u_\alpha\),
\[
\mathcal{J}(u_\alpha) 
= 2 \int_{\mathcal{G}} u_\alpha(x)\,dx 
- \langle u_\alpha, (-\Delta_{\mathcal{G}})^\alpha u_\alpha \rangle.
\]
Since \( (-\Delta_{\mathcal{G}})^\alpha u_\alpha = 1\), the second term equals \(\int_{\mathcal{G}} u_\alpha(x)\,dx\). Hence
\[
\mathcal{J}(u_\alpha) = \int_{\mathcal{G}} u_\alpha(x)\,dx = T_{\alpha}(\mathcal{G}).
\]
We verify maximality using Young’s inequality: for \(A,B>0\),
\[
At - \frac{B}{2}t^2 \leq \frac{A^2}{2B}, 
\qquad \text{with equality iff } t = \frac{A}{B}.
\]
For any $u \in H_0^\alpha (\mathcal{G})$, applying this with \(A = 2\int_{\mathcal{G}} u\) and \(B = \langle u, (-\Delta_{\mathcal{G}})^\alpha u \rangle\), we obtain
\[
\mathcal{J}(u)
\leq \max_{\lambda \in \mathbb{R}} \mathcal{J}(\lambda u)
= \frac{\big(\int_{\mathcal{G}} u(x)\,dx\big)^2}
{\langle u, (-\Delta_{\mathcal{G}})^\alpha u \rangle},
\]
with equality at \(\lambda=\frac{2\int_{\mathcal{G}} udx}{\langle u, (-\Delta_{\mathcal{G}})^\alpha u \rangle}\). Since multiplying the argument $u$ by non-zero scalars does not change the quotient invariant under, any multiple of $u_\alpha$ maximizes it.
\end{proof}

\section{Surgery Principles}\label{section: surgery principles}
\noindent
The preceding sections established the definition and variational characterization of the fractional torsional rigidity on metric graphs. 
We now turn to a set of principles describing how the torsional rigidity behaves under simple modifications of the graph structure, such as duplicating edges, unfolding and cutting  cycles, lengthening edges, or gluing vertices. 
These results, often referred to as \emph{surgery principles}, allow us to compare the torsional rigidity of complex graphs to simpler ones, providing both qualitative and quantitative insights. 
In particular, they highlight how the topology and geometry of the graph influence the fractional torsional rigidity.

\noindent \textbf{Doubling edges.} The principle of doubling edges consists in replacing a single edge by two parallel edges of the same length between the same pair of vertices (see Figure \ref{fig:doublingedges}). 
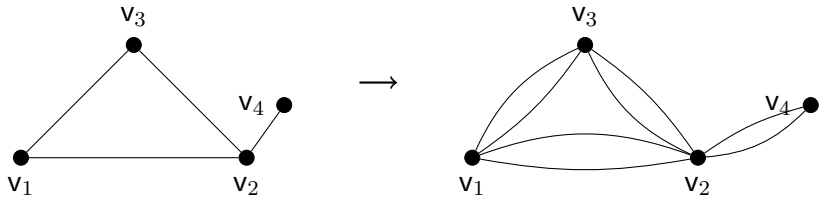
\begin{figure}
    \centering
    \begin{tikzpicture}
        \node[draw, circle, inner sep=2pt, fill,label=below:$\mathsf{v}_1$] (v1) at (0,0) {};
        \node[draw, circle, inner sep=2pt, fill, label=below:$\mathsf{v}_2$] (v2) at (3,0) {};
        \node[draw, circle, inner sep=2pt, fill,label=above:$\mathsf{v}_3$] (v3) at (1.5,1.5) {};
        \node[draw, circle, inner sep=2pt, fill,label=left:$\mathsf{v}_4$] (v4) at (3.5,0.7) {};
        
        \draw (v1) -- (v2);
        \draw (v1) -- (v3);
        \draw (v2) -- (v3);
        \draw (v4) -- (v2);

        \draw[thick,->] (4.5,1) -- (5,1);


    \node[draw, circle, inner sep=2pt, fill,label=below:$\mathsf{v}_1$] (v1) at (6,0) {};
    \node[draw, circle, inner sep=2pt, fill,label=below:$\mathsf{v}_2$] (v2) at (9,0) {};
    \node[draw, circle, inner sep=2pt, fill,label=above:$\mathsf{v}_3$] (v3) at (7.5,1.5) {};
    \node[draw, circle, inner sep=2pt, fill,label=left:$\mathsf{v}_4$] (v4) at (10.5,0.7) {};
    
     \draw[bend right=10] (v1) to (v2);
    \draw[bend left=20] (v1) to (v2);

    \draw[bend right=10] (v1) to (v3);
    \draw[bend left=20] (v1) to (v3);

   \draw[bend right=10] (v2) to (v3);
    \draw[bend left=20] (v2) to (v3);

     \draw[bend right=10] (v4) to (v2);
    \draw[bend left=20] (v4) to (v2);
    \end{tikzpicture}
      \caption{The graph obtained by doubling each edge: for every original connection between two vertices, a second parallel edge is added.}
      \label{fig:doublingedges}
\end{figure}
\begin{theorem}[Edge Doubling]\label{princ:edge-doubling}
	Let $\alpha \in (0,1)$ and  \(\mathcal{G}\) be a compact connected metric graph, and define \(\tilde{\mathcal{G}}\) by replacing each edge with two parallel copies of equal length. Then
	\[
	T_{\alpha}(\mathcal{G})\leq \frac{1}{2} T_{\alpha}(\tilde{\mathcal{G}}).
	\]
\end{theorem}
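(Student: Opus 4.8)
The plan is to use the variational characterization \eqref{eqn:variationalcharacterization} together with a natural ``lifting'' map from functions on $\mathcal{G}$ to functions on $\tilde{\mathcal{G}}$. Given the torsion function $u_\alpha$ on $\mathcal{G}$ that realizes the supremum, I would define $\tilde{f}$ on $\tilde{\mathcal{G}}$ by placing a copy of $u_\alpha$ on each of the two parallel edges replacing every original edge $\mathsf{e}$; since $u_\alpha$ restricted to $\mathsf{e}$ is symmetric under this duplication, $\tilde{f}$ is continuous at the vertices and satisfies the Kirchhoff/Dirichlet conditions on $\tilde{\mathcal{G}}$ (at non-Dirichlet vertices, the normal derivatives coming from the two copies of each edge sum to twice the original sum, which is still zero), so $\tilde{f} \in H_0^\alpha(\tilde{\mathcal{G}})$. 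The key point is to relate the Rayleigh quotient of $\tilde{f}$ on $\tilde{\mathcal{G}}$ to that of $u_\alpha$ on $\mathcal{G}$: one has $\int_{\tilde{\mathcal{G}}} \tilde{f}\,dx = 2\int_{\mathcal{G}} u_\alpha\,dx$ because the mass doubles, while I expect $\langle \tilde{f}, (-\Delta_{\tilde{\mathcal{G}}})^\alpha \tilde{f}\rangle = 2\langle u_\alpha, (-\Delta_{\mathcal{G}})^\alpha u_\alpha\rangle$. Plugging into the variational characterization gives
\[
T_\alpha(\tilde{\mathcal{G}}) \geq \frac{\big(\int_{\tilde{\mathcal{G}}} \tilde{f}\,dx\big)^2}{\langle \tilde{f}, (-\Delta_{\tilde{\mathcal{G}}})^\alpha \tilde{f}\rangle} = \frac{4\big(\int_{\mathcal{G}} u_\alpha\,dx\big)^2}{2\langle u_\alpha, (-\Delta_{\mathcal{G}})^\alpha u_\alpha\rangle} = 2\,T_\alpha(\mathcal{G}),
\]
which is the claim.

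The main obstacle is the energy identity $\langle \tilde{f}, (-\Delta_{\tilde{\mathcal{G}}})^\alpha \tilde{f}\rangle = 2\langle u_\alpha, (-\Delta_{\mathcal{G}})^\alpha u_\alpha\rangle$, because the fractional energy is \emph{nonlocal}: it is defined spectrally via $\|\cdot\|_{H_0^\alpha}^2 = \sum_n \lambda_n^\alpha |\langle \cdot, \varphi_n\rangle|^2$, and the eigenfunctions and eigenvalues of $\tilde{\mathcal{G}}$ are not simply related to those of $\mathcal{G}$ edge by edge. For the classical case $\alpha=1$ the energy is $\int |\tilde{f}'|^2$, which is manifestly local and doubles immediately; here a different argument is needed. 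The natural route is to observe that the ``symmetrization'' subspace of $L^2(\tilde{\mathcal{G}})$ consisting of functions that agree on each pair of parallel edges is invariant under $-\Delta_{\tilde{\mathcal{G}}}$, and that the restriction of $-\Delta_{\tilde{\mathcal{G}}}$ to this subspace is unitarily equivalent (up to the factor $\sqrt2$ in the $L^2$ normalization coming from the doubled measure) to $-\Delta_{\mathcal{G}}$. Concretely, the map $\iota : L^2(\mathcal{G}) \to L^2(\tilde{\mathcal{G}})$, $(\iota g)|_{\mathsf{e}_i} = \tfrac{1}{\sqrt2} g|_{\mathsf{e}}$ for $i=1,2$, is an isometry onto the symmetric subspace, intertwines the two Laplacians, and hence intertwines the quadratic forms and all their fractional powers. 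Therefore $\iota$ maps the eigenbasis of $-\Delta_{\mathcal{G}}$ to an orthonormal set of eigenfunctions of $-\Delta_{\tilde{\mathcal{G}}}$ with the \emph{same} eigenvalues, and for $\tilde{f} = \sqrt2\,\iota u_\alpha$ (the unnormalized copy described above) one gets $\langle \tilde{f}, (-\Delta_{\tilde{\mathcal{G}}})^\alpha \tilde{f}\rangle = 2\sum_n \lambda_n^\alpha |\langle u_\alpha, \varphi_n\rangle|^2 = 2\langle u_\alpha, (-\Delta_{\mathcal{G}})^\alpha u_\alpha\rangle$, as required.

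The remaining steps are routine: verify that $\iota$ is well-defined into $H_0^\alpha(\tilde{\mathcal{G}})$ (vertex conditions, as noted above, are preserved because the symmetry makes continuity automatic and the Kirchhoff sum scales by the number of copies), confirm the mass computation $\int_{\tilde{\mathcal{G}}} \tilde f\,dx = 2\int_{\mathcal{G}} u_\alpha\,dx$ by a direct edgewise integration, and then invoke \eqref{eqn:variationalcharacterization} for $\tilde{\mathcal{G}}$ with the test function $\tilde f$ to conclude $T_\alpha(\tilde{\mathcal{G}}) \geq 2\,T_\alpha(\mathcal{G})$, i.e. $T_\alpha(\mathcal{G}) \leq \tfrac12 T_\alpha(\tilde{\mathcal{G}})$. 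I would present the intertwining-isometry argument as the conceptual heart of the proof, since it is precisely the ingredient that replaces the trivial locality argument available in the classical setting.
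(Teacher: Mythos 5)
Your proof is correct and follows the same overall strategy as the paper: lift a test function from $\mathcal{G}$ to $\tilde{\mathcal{G}}$ by copying it onto each pair of parallel edges and compare Rayleigh quotients via the variational characterization \eqref{eqn:variationalcharacterization}. You are, however, more careful about the one genuinely delicate point, and your care pays off: you correctly observe that if $\tilde{\phi}_n$ denotes the edgewise double copy of the normalized eigenfunction $\varphi_n$, then $\|\tilde{\phi}_n\|_{L^2(\tilde{\mathcal{G}})}^2 = 2$, so the orthonormal eigenfunction of $-\Delta_{\tilde{\mathcal{G}}}$ spanning the symmetric subspace is $\tfrac{1}{\sqrt 2}\tilde{\phi}_n = \iota\varphi_n$, and hence
\[
\langle\tilde f, (-\Delta_{\tilde{\mathcal{G}}})^\alpha\tilde f\rangle
= \sum_n\lambda_n^\alpha\bigl|\langle\tilde f, \iota\varphi_n\rangle\bigr|^2
= 2\sum_n\lambda_n^\alpha|\langle f,\varphi_n\rangle|^2
= 2\,\langle f, (-\Delta_{\mathcal{G}})^\alpha f\rangle .
\]
The paper instead asserts that the lifted energy equals $4\langle f, (-\Delta_{\mathcal{G}})^\alpha f\rangle$, which would give a ratio of $\tfrac{4}{4}=1$ for the Rayleigh quotients, not the factor $2$ written in the paper's own final displayed line. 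The source of the discrepancy is exactly the normalization you flag: the paper pairs the unnormalized $\tilde{\phi}_n$ (for which $\langle\tilde f,\tilde\phi_n\rangle = 2\langle f,\phi_n\rangle$) with a Parseval-type spectral identity that only holds for orthonormal systems. Your $\iota$-isometry formulation repairs this cleanly and, as a bonus, makes explicit that $\tilde f$ is orthogonal to the antisymmetric eigenfunctions of $-\Delta_{\tilde{\mathcal{G}}}$, a point the paper leaves implicit; your computation is the one consistent with the theorem's conclusion.
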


\begin{proof}
	Given \(f \in H_0^\alpha(\mathcal{G})\), define \(\tilde{f} \in H_0^\alpha(\tilde{\mathcal{G}})\) by copying \(f\) onto each duplicated edge. Then:
	\[
	\langle \tilde{f}, 1 \rangle = 2 \langle f, 1 \rangle, \quad \langle \tilde{f}, (-\Delta_{\tilde{\mathcal{G}}})^{\alpha} \tilde{f} \rangle = 4 \langle f, (-\Delta_{\mathcal{G}})^{\alpha} f \rangle,
	\]
	where the last equality follows from
    \begin{align*}
        \langle \tilde{f}, (-\Delta_{\tilde{\mathcal{G}}})^\alpha \tilde{f} \rangle &=\sum_{n=1}^\infty \tilde{\lambda}_n^\alpha |\langle \tilde{f},\tilde{\phi_n} \rangle |^2\\
        &= 4\sum_{n=1}^\infty \lambda_n^\alpha |\langle f, \phi_n \rangle |^2,
    \end{align*}
    as the eigenvalues of $(-\Delta_{\tilde{\mathcal{G}}})^\alpha$ and $(-\Delta_{\mathcal{G}})^\alpha$ are the same and the corresponding eigenfunctions have the following relationship $ \langle \tilde{f},\tilde{\phi_n} \rangle =2 \langle f, \phi_n \rangle$.
    The variational formula becomes:
	\[
	\frac{\left( \int_{\mathcal{G}} \tilde{f}(x)\, dx\right) ^2}{\langle \tilde{f}, (-\Delta_{\tilde{\mathcal{G}}})^\alpha \tilde{f}  \rangle } = 2\frac{\left( \int_{\mathcal{G}} f(x)\, dx\right) ^2}{\langle f, (-\Delta_{\mathcal{G}})^\alpha f  \rangle },
	\]
	yielding the inequality.
\end{proof}
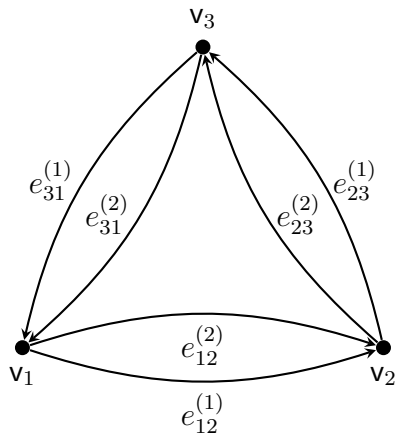
\begin{figure}
\centering
\begin{tikzpicture}[>=stealth,scale=1.6]
  \node[fill,circle,inner sep=2pt,label=below:$\mathsf{v}_1$] (v1) at (0,0) {};
  \node[fill,circle,inner sep=2pt,label=below:$\mathsf{v}_2$] (v2) at (3,0) {};
  \node[fill,circle,inner sep=2pt,label=above:$\mathsf{v}_3$]  (v3) at (1.5,2.5) {};

  \foreach \a/\b in {v1/v2,v2/v3,v3/v1}{
    \draw[->, ultra thin, gray!30, bend left=18]  (\a) to (\b);
    \draw[->, ultra thin, gray!30, bend right=18] (\a) to (\b);
  }

  \draw[->, thick, bend right=18]
      (v1) to node[below,pos=0.5,black] {$e_{12}^{(1)}$} (v2);
  \draw[->,  thick, bend right=18]
      (v2) to node[right,pos=0.5,black] {$e_{23}^{(1)}$} (v3);
  \draw[->,  thick, bend right=18]
      (v3) to node[left,pos=0.5,black]  {$e_{31}^{(1)}$} (v1);
  \draw[->,   thick,   bend left=18]
      (v1) to node[below,pos=0.5,black] {$e_{12}^{(2)}$} (v2);
  \draw[->,   thick,   bend left=18]
      (v2) to node[right,pos=0.5,black] {$e_{23}^{(2)}$} (v3);
  \draw[->, thick,   bend left=18]
      (v3) to node[left,pos=0.5,black]  {$e_{31}^{(2)}$} (v1);
\end{tikzpicture}
\caption{Triangle with each edge doubled. Label the parallel edges $e_{ij}^{(1)},e_{ij}^{(2)}$. The Eulerian closed trail shown in red traverses the edges in order $e_{12}^{(1)},e_{23}^{(1)},e_{31}^{(1)},e_{12}^{(2)},e_{23}^{(2)},e_{31}^{(2)}$, i.e.\ $\mathsf{v}_1\to\mathsf{v}_2\to\mathsf{v}_3\to\mathsf{v}_1\to\mathsf{v}_2\to\mathsf{v}_3\to\mathsf{v}_1$.}
\label{fig:triangle-doubled}
\end{figure}
\begin{theorem}[Unfolding to a Cycle]\label{princ:unfold-cycle}
	Let $\alpha \in (0,1)$ and $\mathcal{G}$ be a compact, connected metric graph in which all vertex degrees are even. Then, there exists a cycle graph $\mathcal{C}$ of the same total length as $\mathcal{G}$ such that
	\[
	T_{\alpha}(\mathcal{G}) \leq T_{\alpha}(\mathcal{C}).
	\]
\end{theorem}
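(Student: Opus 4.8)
The plan is to use the variational characterization \eqref{eqn:variationalcharacterization} together with an Eulerian-trail argument. Since $\mathcal{G}$ is connected and all vertex degrees are even, $\mathcal{G}$ admits an Eulerian closed trail, i.e.\ a closed path that traverses every edge exactly once. Concatenating the edges in the order given by this trail identifies $\mathcal{G}$ (as a measure space) with a cycle graph $\mathcal{C}$ of circumference $|\mathcal{C}| = |\mathcal{G}|$: there is a length-preserving, measure-preserving bijection $\Phi:\mathcal{G}\to\mathcal{C}$. The Dirichlet set on $\mathcal{C}$ should be chosen to be the image under $\Phi$ of $\mathcal{V}_D$ (a nonempty finite set of points on the cycle), so that $\mathcal{C}$ is really a cycle with Dirichlet conditions at those points; note that in fact a cycle with a single Dirichlet vertex has larger $T_\alpha$ than one with more Dirichlet vertices, by a further gluing/surgery argument, but for the statement as written we only need one such $\mathcal{C}$.

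First I would set up the pushforward map. For $f\in H_0^\alpha(\mathcal{G})$, define $g = f\circ\Phi^{-1}$ on $\mathcal{C}$. Because $\Phi$ is measure-preserving, $\int_{\mathcal{C}} g\,dx = \int_{\mathcal{G}} f\,dx$ and $\|g\|_{L^2(\mathcal{C})} = \|f\|_{L^2(\mathcal{G})}$. The key point is the energy comparison: I claim
\[
\langle g, (-\Delta_{\mathcal{C}})^\alpha g\rangle \;\le\; \langle f, (-\Delta_{\mathcal{G}})^\alpha f\rangle .
\]
For $\alpha=1$ this is immediate because the Dirichlet energy $\int |f'|^2$ is computed edgewise and is preserved under $\Phi$ (the cycle simply re-glues the edge-endpoints in a different pattern, and removing/rearranging the continuity constraints at vertices only enlarges the admissible set, hence cannot increase the infimum defining the energy form — more precisely $H^1_0(\mathcal{G})\hookrightarrow H^1_0(\mathcal{C})$ isometrically for the homogeneous seminorm under $\Phi$, since a function continuous across the old vertices is in particular continuous across the cycle's gluings). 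For $\alpha\in(0,1)$ one cannot argue edgewise, so instead I would use the heat-semigroup (subordination) representation
\[
\langle f,(-\Delta_{\mathcal{G}})^\alpha f\rangle = \frac{c_\alpha}{\;}\int_0^\infty t^{-1-\alpha}\big\langle f - e^{t\Delta_{\mathcal{G}}}f,\, f\big\rangle\,dt
= \frac{c_\alpha}{2}\int_0^\infty t^{-1-\alpha}\!\!\iint p^{\mathcal{G}}_t(x,y)\,|f(x)-f(y)|^2\,dx\,dy\,dt,
\]
valid for $\alpha\in(0,1)$ with $c_\alpha>0$, where $p^{\mathcal{G}}_t$ is the Dirichlet heat kernel of $\mathcal{G}$. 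The comparison then reduces to a domination of heat kernels: one shows $p^{\mathcal{C}}_t(\Phi x,\Phi y)$ is dominated, in the appropriate averaged sense against $|f(x)-f(y)|^2$, by $p^{\mathcal{G}}_t(x,y)$. This follows because $\mathcal{G}$'s graph structure is obtained from $\mathcal{C}$ by gluing additional pairs of points (the higher-degree vertices), and gluing points together can only increase the heat kernel (a Brownian motion has more ways to travel between two points), together with the fact that the Dirichlet condition is imposed on the same point set $\Phi(\mathcal{V}_D)$ on both. Equivalently, one can phrase it via the quadratic forms: the form domain of $-\Delta_{\mathcal{G}}$ (functions in $\bigoplus H^1(e)$ continuous at all vertices of $\mathcal{G}$, vanishing on $\mathcal{V}_D$) is a subspace of the form domain of $-\Delta_{\mathcal{C}}$ after transport by $\Phi$, and the two quadratic forms agree on this subspace; hence by the monotonicity of $t\mapsto t^\alpha$ and the variational (Courant–Fischer / Kato) characterization of fractional powers, $\langle g,(-\Delta_{\mathcal{C}})^\alpha g\rangle \le \langle f,(-\Delta_{\mathcal{G}})^\alpha f\rangle$ for every such $f$.

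Granting the energy inequality, the proof concludes quickly: for every $f\in H_0^\alpha(\mathcal{G})\setminus\{0\}$,
\[
\frac{\big(\int_{\mathcal{G}} f\,dx\big)^2}{\langle f,(-\Delta_{\mathcal{G}})^\alpha f\rangle}
\;\le\;
\frac{\big(\int_{\mathcal{C}} g\,dx\big)^2}{\langle g,(-\Delta_{\mathcal{C}})^\alpha g\rangle}
\;\le\; T_\alpha(\mathcal{C}),
\]
and taking the supremum over $f$ on the left, the variational characterization \eqref{eqn:variationalcharacterization} gives $T_\alpha(\mathcal{G})\le T_\alpha(\mathcal{C})$.

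\textbf{Main obstacle.} The delicate step is the energy comparison $\langle g,(-\Delta_{\mathcal{C}})^\alpha g\rangle \le \langle f,(-\Delta_{\mathcal{G}})^\alpha f\rangle$ for fractional $\alpha$. The naive hope — that an isometric inclusion of form domains for $-\Delta$ passes to the fractional powers — is \emph{not} automatic, because $A\le B$ as forms does not in general give $A^\alpha \le B^\alpha$ unless one is careful; however, the Löwner–Heinz inequality (operator monotonicity of $t\mapsto t^\alpha$ for $\alpha\in(0,1)$) does yield exactly this, provided the comparison of the \emph{operators} $-\Delta_{\mathcal{C}} \le -\Delta_{\mathcal{G}}$ is set up correctly on a common Hilbert space via the isometry $\Phi$ and the inclusion of form domains. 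I would therefore spend the bulk of the argument making precise: (i) that $\Phi$ induces an isometry $L^2(\mathcal{C})\to L^2(\mathcal{G})$ mapping the form domain of $-\Delta_{\mathcal{C}}$ into that of $-\Delta_{\mathcal{G}}$ with the forms agreeing; (ii) that this implies $-\Delta_{\mathcal{C}}\le -\Delta_{\mathcal{G}}$ in the sense of quadratic forms on the larger space; and (iii) invoking Löwner–Heinz to conclude the same inequality for the $\alpha$-th powers, hence for the energies of $g$ and $f$. The heat-kernel route sketched above is an alternative that sidesteps operator monotonicity but requires proving the pointwise/averaged heat-kernel domination under vertex gluing, which is intuitively clear from the probabilistic picture but needs a clean analytic justification (e.g.\ via Trotter-type approximation or the Feynman–Kac formula on the graph).
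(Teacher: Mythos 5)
Your overall architecture coincides with the paper's: an Eulerian circuit unfolds $\mathcal{G}$ to a cycle $\mathcal{C}$ of the same length, test functions are transported along the unfolding map, and the variational characterization \eqref{eqn:variationalcharacterization} converts the pointwise comparison into $T_\alpha(\mathcal{G})\le T_\alpha(\mathcal{C})$. The difference, and it is a substantive one, lies in the key energy comparison. The paper asserts the \emph{equality} $\|\tilde v\|_{H_0^\alpha(\mathcal{C})} = \|v\|_{H_0^\alpha(\mathcal{G})}$, i.e.\ that the unfolding map is an isometry of the homogeneous fractional Sobolev norms. For $\alpha=1$ this is trivially $\int|\tilde v'|^2=\int|v'|^2$, an edgewise quantity; but for $\alpha<1$ the $H_0^\alpha$ norm is defined spectrally through the eigenpairs of $-\Delta_{\mathcal{G}}$ and $-\Delta_{\mathcal{C}}$ respectively, and these two operators have \emph{different} vertex conditions (at a degree-$2k$ vertex of $\mathcal{G}$, Kirchhoff imposes a single flux balance over $2k$ incident edges, while on $\mathcal{C}$ the $k$ corresponding cycle points each carry an independent $C^1$-constraint), hence different spectral data; the claimed isometry is not justified and in fact fails in general. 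What you prove instead is the one-sided inequality $\|\tilde v\|_{H_0^\alpha(\mathcal{C})}\le\|v\|_{H_0^\alpha(\mathcal{G})}$, and your route — the form-domain inclusion under $\Phi$ gives $-\Delta_{\mathcal{C}}\le -\Delta_{\mathcal{G}}$ in the quadratic-form sense, and L\"owner--Heinz (operator monotonicity of $t\mapsto t^\alpha$ for $\alpha\in(0,1)$, in its form version for unbounded positive operators) then gives $(-\Delta_{\mathcal{C}})^\alpha\le(-\Delta_{\mathcal{G}})^\alpha$ — is exactly the missing step that makes the argument watertight. Your flagged ``main obstacle'' is therefore the right place to focus, and your operator-monotonicity resolution is sound; the heat-kernel/subordination alternative you sketch would also work but requires the Dirichlet killing term to be tracked and is harder to make clean. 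In short: same route, but your handling of the crucial energy inequality is more careful than the paper's and in fact repairs a gap in it.
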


\begin{proof}
Since every vertex in \(\mathcal{G}\) has even degree, there exists a closed Eulerian circuit \(\gamma\) traversing each edge exactly once (see \cite{Karreskog2015} for more detailed information about the Eulerian cycle technique; for an illustration, see Figure~\ref{fig:triangle-doubled}). Let \(\mathcal{C}\) be a cycle graph of total length \(|\mathcal{G}|\), parameterized by \(\gamma\), and define the map \(\Phi: \mathcal{C} \to \mathcal{G}\) following \(\gamma\). This map is surjective and measure-preserving (up to vertices). For any \(v \in H_0^\alpha(\mathcal{G})\), define its unfolding \(\tilde v = v \circ \Phi\) on \(\mathcal{C}\). Then
\[
\int_{\mathcal{C}} \tilde v = \int_{\mathcal{G}} v, \qquad 
\|\tilde v\|_{L^2(\mathcal{C})} = \|v\|_{L^2(\mathcal{G})}, \qquad
\|\tilde v\|_{H_0^\alpha(\mathcal{C})} = \|v\|_{H_0^\alpha(\mathcal{G})},
\]
so that the map \(v \mapsto \tilde v\) is an isometry from \(H_0^\alpha(\mathcal{G})\) into \(H_0^\alpha(\mathcal{C})\).
It follows that, for any \(v \in H_0^\alpha(\mathcal{G})\),
\[
\frac{\left( \int_{\mathcal{G}} v \right)^2}{\|v\|_{H_0^\alpha(\mathcal{G})}^2}
= \frac{\left( \int_{\mathcal{C}} \tilde v \right)^2}{\|\tilde v\|_{H_0^\alpha(\mathcal{C})}^2}
\leq \sup_{w \in H_0^\alpha(\mathcal{C})} \frac{\left( \int_{\mathcal{C}} w \right)^2}{\|w\|_{H_0^\alpha(\mathcal{C})}^2}
= T_{\alpha}(\mathcal{C}),
\]
and taking the supremum over all \(v\) gives the desired inequality
\[
T_{\alpha}(\mathcal{G}) \leq T_{\alpha}(\mathcal{C}).\qedhere
\]
\end{proof}
\noindent \textbf{Cutting a cycle to an interval.}
Cutting a cycle at one of its vertices produces an interval graph (see Figure \ref{fig:cutting}). It provides a bridge between graphs with cyclic symmetry and the fundamental case of an interval.
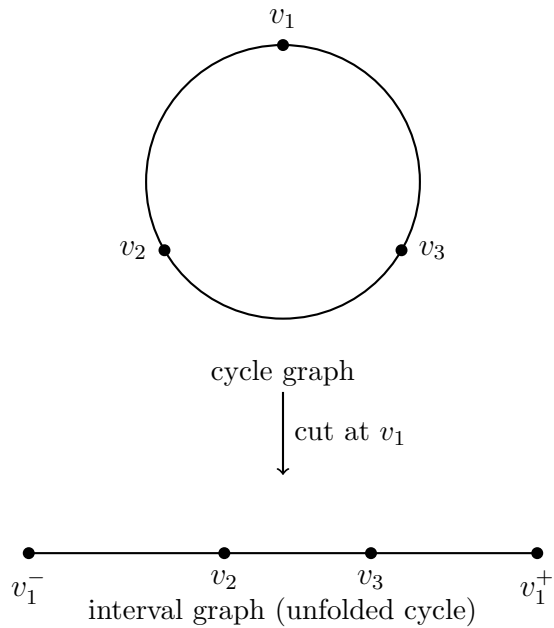
\begin{figure}[h!]
\centering
\begin{tikzpicture}[scale=1.3]
  \tikzset{vertex/.style={circle,fill=black,inner sep=1.6pt}}

  \def\R{1.4}
  \draw[thick] (0,0) circle (\R);

  \node[vertex,label=above:$v_1$] at (0,\R) {};
  \node[vertex,label=left:$v_2$]  at (-1.212,-0.70) {};
  \node[vertex,label=right:$v_3$] at ( 1.212,-0.70) {};

  \node at (0,-1.95) {\small cycle graph};

  \draw[->,thick] (0,-2.15) -- (0,-3.0) node[midway,right] {\small cut at $v_1$};

  \draw[thick] (-2.6,-3.8) -- (2.6,-3.8);

  \node[vertex,label=below:$v_1^{-}$] at (-2.6,-3.8) {};
  \node[vertex,label=below:$v_1^{+}$] at ( 2.6,-3.8) {};

  \node[vertex,label=below:$v_2$] at (-0.6,-3.8) {};
  \node[vertex,label=below:$v_3$] at ( 0.9,-3.8) {};

  \node at (0,-4.4) {\small interval graph (unfolded cycle)};
\end{tikzpicture}
\caption{Cutting the cycle at $v_1$ splits it into two boundary points $v_1^{-},v_1^{+}$ and unfolds the loop to an interval; interior vertices (e.g.\ $v_2,v_3$) lie along the resulting path.}
\label{fig:cutting}
\end{figure}

\begin{theorem}[Cycle Cutting to Interval]\label{princ:cut-cycle}
	Let $\alpha \in (0,1)$ and  \(\mathcal{C}_L\) be a metric cycle graph of total length \(L\) with at least one Dirichlet vertex \(v_0\). Cutting \(\mathcal{C}_L\) at \(v_0\) yields an interval graph \(\mathcal{I}_L\) of the same length with Dirichlet boundary conditions at both endpoints. Then
	\[
	T_{\alpha}(\mathcal{C}_L) =T_{\alpha}(\mathcal{I}_L).
	\]
\end{theorem}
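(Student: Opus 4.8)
The plan is to show that cutting at the Dirichlet vertex $v_0$ produces, after the obvious relabelling of the underlying interval, not merely an isospectral object but literally the same self-adjoint operator, so that the fractional powers, the torsion functions, and their $L^1$-masses all coincide. Realise the cycle $\mathcal{C}_L$ as the edge $[0,L]$ with its two endpoints glued to form $v_0$, and the interval $\mathcal{I}_L$ as the \emph{same} edge $[0,L]$ with $0$ and $L$ kept as separate degree-one boundary vertices; any interior vertices of $\mathcal{C}_L$ become interior vertices of $\mathcal{I}_L$ carrying identical conditions. The canonical identification $U\colon L^2(\mathcal{C}_L)\to L^2(\mathcal{I}_L)$ is just the identity on $L^2(0,L)$, hence unitary, and it sends the constant function $1$ to $1$.

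First I would check that $U$ maps $\mathrm{D}(-\Delta_{\mathcal{C}_L})$ onto $\mathrm{D}(-\Delta_{\mathcal{I}_L})$ and intertwines the two operators. The only vertex affected by the cut is $v_0$: on $\mathcal{C}_L$ the Dirichlet condition there reads $u(0)=u(L)=0$ with no further constraint (a Dirichlet vertex imposes neither continuity of the function across it nor any relation between the one-sided derivatives), while on $\mathcal{I}_L$ the boundary condition at the two endpoints reads exactly $u(0)=u(L)=0$. Thus the two domains agree, and on each the operator acts edgewise as $-d^2/dx^2$, so $U(-\Delta_{\mathcal{C}_L})=(-\Delta_{\mathcal{I}_L})U$. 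Consequently $U$ intertwines the spectral functional calculus, in particular $U(-\Delta_{\mathcal{C}_L})^{\alpha}=(-\Delta_{\mathcal{I}_L})^{\alpha}U$ and $U(-\Delta_{\mathcal{C}_L})^{-\alpha}=(-\Delta_{\mathcal{I}_L})^{-\alpha}U$ in the sense of \eqref{eq:fractional-laplacian}.

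Applying this to $(-\Delta_{\mathcal{C}_L})^{\alpha}u_\alpha^{\mathcal{C}}=1$ gives $(-\Delta_{\mathcal{I}_L})^{\alpha}(Uu_\alpha^{\mathcal{C}})=U1=1$, so by uniqueness of the torsion function $Uu_\alpha^{\mathcal{C}}=u_\alpha^{\mathcal{I}}$, and integrating over the common interval yields $T_\alpha(\mathcal{C}_L)=T_\alpha(\mathcal{I}_L)$. Equivalently, $U$ carries the eigenpairs $(\lambda_n,\varphi_n^{\mathcal{C}})$ of $-\Delta_{\mathcal{C}_L}$ to those of $-\Delta_{\mathcal{I}_L}$ while preserving the coefficients $\langle 1,\varphi_n\rangle$, so the spectral series \eqref{eq:torsional_rigidity_series} are equal term by term; or one simply observes that $U$ is an isometric \emph{bijection} $H_0^\alpha(\mathcal{C}_L)\to H_0^\alpha(\mathcal{I}_L)$ preserving $\int_{\cdot}f\,dx$ and the energy, so the variational quotients in \eqref{eqn:variationalcharacterization} have the same supremum. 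This also refines Theorem~\ref{princ:unfold-cycle}: the unfolding map is now surjective, so the inequality there becomes an equality.

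The only genuinely substantive point, and the one I would take care to spell out, is the claim that the Dirichlet condition at $v_0$ carries no hidden coupling between the two sides of the loop—unlike a Kirchhoff or $\delta$-type vertex, it neither forces continuity across $v_0$ nor relates the incoming derivatives. This is exactly what makes cutting at $v_0$ lossless, and it is where the hypothesis that $v_0$ be a \emph{Dirichlet} vertex is used; cutting at a Kirchhoff vertex would in general strictly increase $T_\alpha$.
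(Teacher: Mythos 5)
Your proposal is correct and follows essentially the same route as the paper's proof: both exploit the fact that the Dirichlet condition at $v_0$ imposes $u(0)=u(L)=0$ with no coupling across the cut, so the canonical identification of $L^2(\mathcal{C}_L)$ with $L^2(\mathcal{I}_L)$ is an isometric bijection of the relevant Sobolev spaces that preserves $\int f$ and the energy, making the variational problems in \eqref{eqn:variationalcharacterization} identical. The only difference is that you make the underlying mechanism more explicit by promoting this to a unitary equivalence of the Laplacians that intertwines the spectral calculus — a cleaner justification of why the $H_0^\alpha$-norm (which is defined through the eigenbasis) is actually preserved, a point the paper states but does not unpack.
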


\begin{proof}
	The solution \(u_\alpha\) to \((-\Delta_{\mathcal{C}_L})^\alpha u_\alpha = 1\) on \(\mathcal{C}_L\) with Dirichlet condition at \(v_0\) belongs to the space \(H_0^\alpha(\mathcal{C}_L)\) with \(u_\alpha(v_0) = 0\).
	Cutting the cycle at \(v_0\) produces an interval \(\mathcal{I}_L\), and any function \(u\) on \(\mathcal{C}_L\) vanishing at \(v_0\) naturally becomes a function on \(\mathcal{I}_L\) vanishing at both endpoints. Conversely, any function in \(H_0^\alpha(\mathcal{I}_L)\) extends uniquely to a function on \(\mathcal{C}_L\) vanishing at \(v_0\).
	
	Since the norms \(\|u\|_{{H}_0^\alpha}\) and the energy functional \(\int u\) are preserved under this identification, the variational problems defining \(T_\alpha\) on both graphs are equivalent. Thus,
	\[
	T_{\alpha}(\mathcal{C}_L) = T_{\alpha}(\mathcal{I}_L).\qedhere
	\]
\end{proof}

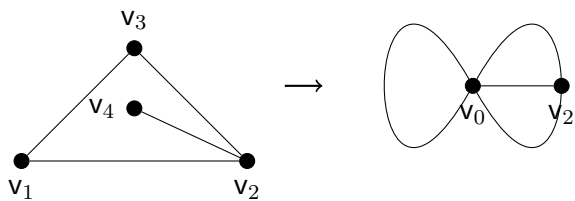
\begin{figure}
    \centering
    \begin{tikzpicture}
        \node[draw, circle, inner sep=2pt, fill,label=below:$\mathsf{v}_1$] (v1) at (0,0) {};
        \node[draw, circle, inner sep=2pt, fill, label=below:$\mathsf{v}_2$] (v2) at (3,0) {};
        \node[draw, circle, inner sep=2pt, fill,label=above:$\mathsf{v}_3$] (v3) at (1.5,1.5) {};
        \node[draw, circle, inner sep=2pt, fill,label=left:$\mathsf{v}_4$] (v4) at (1.5,0.7) {};
        
        \draw (v1) -- (v2);
        \draw (v1) -- (v3);
        \draw (v2) -- (v3);
        \draw (v4) -- (v2);

        \draw[thick,->] (3.5,1) -- (4,1);

        \node[draw, circle, inner sep=2pt, fill, label=below:$\mathsf{v}_0$] (v0) at (6,1) {};
        \node[draw, circle, inner sep=2pt, fill, label=below:$\mathsf{v}_2$] (v2p) at (7.18,1) {};
        
    \draw (v0) to[out=120,in=240,looseness=15,min distance=3cm] (v0);
    \draw (v0) to[out=60,in=300,looseness=15,min distance=3cm] (v0);
    \draw (v0) -- (v2p);
    \end{tikzpicture}
      \caption{The graph \( \mathcal{G} \) on the right is obtained from the graph on the left by gluing the vertices \( \mathsf{v}_1 \), \( \mathsf{v}_3 \), and \( \mathsf{v}_4 \) into a single vertex. Conversely, the graph on the left represents one possible reconstruction from the graph on the right by splitting \( \mathsf{v}_0 \) into the vertices \( \mathsf{v}_1 \), \( \mathsf{v}_3 \), and \( \mathsf{v}_4 \).}
      \label{fig:mergingandsplittingvertices}
\end{figure}
\noindent\textbf{Gluing vertices.}
Gluing vertices means identifying two or more vertices into a single one (see Figure \ref{fig:mergingandsplittingvertices}).
\begin{theorem}[Gluing Vertices Decreases Torsional Rigidity]\label{prin:gluingvertices}
	Let $\alpha \in (0,1)$ and \(\mathcal{G}\) be a compact connected metric graph, and let \(\mathcal{G}'\) be the graph obtained from \(\mathcal{G}\) by gluing two distinct vertices \(v_1, v_2 \in \mathcal{V}(\mathcal{G})\). Denote by \(T_{\alpha}(\mathcal{G})\) the torsional rigidity associated with the operator \((-\Delta_{\mathcal{G}})^\alpha\) under Dirichlet conditions on a fixed subset of vertices. Then 
	\[
	T_{\alpha}(\mathcal{G}') \leq T_{\alpha}(\mathcal{G}).
	\]
\end{theorem}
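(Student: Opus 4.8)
The plan is to use the variational characterization \eqref{eqn:variationalcharacterization}. The key observation is that gluing two vertices $v_1, v_2$ into a single vertex $v$ is, at the level of function spaces, a restriction: a function $f$ on $\mathcal{G}'$ (the glued graph) pulls back to a function on $\mathcal{G}$ by composition with the natural quotient map $q : \mathcal{G} \to \mathcal{G}'$. Since $\mathcal{G}$ and $\mathcal{G}'$ have the same edge set and the same total length, the pullback $f \circ q$ satisfies $\int_{\mathcal{G}} f\circ q = \int_{\mathcal{G}'} f$ and $\|f \circ q\|_{L^2(\mathcal{G})} = \|f\|_{L^2(\mathcal{G}')}$. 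The crucial point is the comparison of the quadratic forms: I would argue that $\langle f\circ q, (-\Delta_{\mathcal{G}})^\alpha (f\circ q)\rangle \le \langle f, (-\Delta_{\mathcal{G}'})^\alpha f\rangle$, because the pullback $f\circ q$ is automatically continuous at the two preimages of $v$ (it has the same value there), hence lies in $H_0^\alpha(\mathcal{G})$, and the quadratic form on $\mathcal{G}$ is \emph{smaller} since $\mathcal{G}$ imposes fewer continuity constraints than $\mathcal{G}'$.

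Concretely, I would carry out the following steps. First, set up the quotient map $q$ and verify the two identities for the linear functional $\int f$ and the $L^2$-norm. Second — and this is the heart of the argument — establish the quadratic-form inequality for $\alpha \in (0,1)$. For $\alpha = 1$ this is immediate because the Dirichlet energy is edgewise and insensitive to how edges are glued, provided continuity is respected; gluing \emph{adds} a continuity constraint, so the form domain on $\mathcal{G}$ is larger and the quadratic form on the common domain agrees, giving form-domination in the sense of quadratic forms: $\mathfrak{a}_{\mathcal{G}} \le \mathfrak{a}_{\mathcal{G}'}$ (as forms, on $H^1_0(\mathcal{G}') \subseteq H^1_0(\mathcal{G})$). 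For fractional $\alpha$ I would lift this via the heat-semigroup / functional-calculus route already used in the positivity proof: the representation $\langle f, (-\Delta)^\alpha f\rangle = c_\alpha \int_0^\infty t^{-1-\alpha}\big(\|f\|_{L^2}^2 - \langle e^{t\Delta} f, f\rangle\big)\,dt$ (valid for $\alpha\in(0,1)$), together with the fact that form domination $\mathfrak{a}_{\mathcal{G}} \le \mathfrak{a}_{\mathcal{G}'}$ implies domination of the heat semigroups in the sense $\langle e^{t\Delta_{\mathcal{G}'}} f, f\rangle \le \langle e^{t\Delta_{\mathcal{G}}}(f\circ q), f\circ q\rangle$ for $f \ge 0$ — more precisely one uses that $(-\Delta_{\mathcal{G}'})^\alpha \ge (-\Delta_{\mathcal{G}})^\alpha$ as forms follows from operator monotonicity of $t \mapsto t^\alpha$ (Löwner–Heinz) applied to the ordering of the Dirichlet forms, interpreted correctly on the smaller form domain. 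Third, conclude: for any $f \in H_0^\alpha(\mathcal{G}')$,
\[
\frac{\big(\int_{\mathcal{G}'} f\big)^2}{\langle f, (-\Delta_{\mathcal{G}'})^\alpha f\rangle}
\le \frac{\big(\int_{\mathcal{G}} f\circ q\big)^2}{\langle f\circ q, (-\Delta_{\mathcal{G}})^\alpha (f\circ q)\rangle}
\le T_\alpha(\mathcal{G}),
\]
and take the supremum over $f$ to get $T_\alpha(\mathcal{G}') \le T_\alpha(\mathcal{G})$.

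The main obstacle I anticipate is making the fractional quadratic-form comparison fully rigorous. The subtlety is that $(-\Delta_{\mathcal{G}})^\alpha$ and $(-\Delta_{\mathcal{G}'})^\alpha$ act on different spaces, so "$\ge$ as forms" must be phrased via the embedding $f \mapsto f\circ q$ of $H_0^\alpha(\mathcal{G}')$ into $H_0^\alpha(\mathcal{G})$; one must check that this embedding is compatible with the spectral (functional-calculus) definition of the fractional powers, which is exactly where the heat-semigroup representation is needed — it converts the spectral definition into something manifestly monotone under form domination. An alternative, perhaps cleaner, route avoiding Löwner–Heinz is to bypass the quadratic-form comparison entirely: note that $f\circ q \in H_0^\alpha(\mathcal{G})$ whenever $f\in H_0^\alpha(\mathcal{G}')$, use the $\alpha$-heat-semigroup subordination formula directly to write both $\langle f,(-\Delta)^\alpha f\rangle$ as integrals of heat-semigroup quantities, and compare those pointwise in $t$ using the Beurling–Deny criteria (the glued graph's heat kernel is dominated by the unglued one's after pullback, since gluing only removes a Neumann-type decoupling). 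I would present whichever of these the preceding sections most directly support; given that the positivity proof already invokes the subordination formula, the heat-semigroup approach is the natural choice.
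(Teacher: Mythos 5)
Your approach is correct and, importantly, closes a step that the paper's own (quite terse) proof leaves unjustified. Both arguments start from the variational characterization \eqref{eqn:variationalcharacterization} and the inclusion $H_0^\alpha(\mathcal{G}') \subset H_0^\alpha(\mathcal{G})$, but the paper then concludes by simply invoking ``the supremum over the smaller space is less than or equal to that over the larger space.'' This would be immediate if the Rayleigh quotient being maximized were the \emph{same} functional on both spaces, but it is not: the denominators are $\langle f,(-\Delta_{\mathcal{G}'})^\alpha f\rangle$ and $\langle f,(-\Delta_{\mathcal{G}})^\alpha f\rangle$, and for $\alpha\neq 1$ these are built from the spectral decompositions of two \emph{different} operators, so they need not agree even on the common domain (in contrast to $\alpha = 1$, where both reduce to the edgewise Dirichlet integral $\int|f'|^2$). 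One therefore needs, as you correctly observe, the form inequality
\[
\langle f,(-\Delta_{\mathcal{G}})^\alpha f\rangle \;\le\; \langle f,(-\Delta_{\mathcal{G}'})^\alpha f\rangle, \qquad f\in H_0^\alpha(\mathcal{G}'),
\]
to make the chain of inequalities go through. Your two routes to it are both valid: the form domination $\mathfrak{a}_{\mathcal{G}}\le\mathfrak{a}_{\mathcal{G}'}$ (the local Dirichlet forms agree on the smaller form domain, which sits inside the larger one) upgraded to fractional powers via the operator monotonicity of $t\mapsto t^\alpha$ for $\alpha\in(0,1]$ (Löwner--Heinz in its quadratic-form version), or, equivalently, the heat-semigroup subordination formula
\[
\langle f,(-\Delta)^\alpha f\rangle \;=\; c_\alpha\int_0^\infty t^{-1-\alpha}\bigl(\|f\|_{L^2}^2 - \langle e^{t\Delta}f,f\rangle\bigr)\,dt
\]
combined with the pointwise-in-$t$ ordering of the semigroups that follows from the form ordering. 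In effect, you have identified and repaired a genuine gap in the paper's proof rather than merely reproducing it, and your version should be regarded as the complete argument.
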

\begin{proof}
	When the two vertices \(v_1\) and \(v_2\) are glued, the resulting function space \(H_0^\alpha(\mathcal{G}')\) becomes a closed subspace of \(H_0^\alpha(\mathcal{G})\), since admissible functions on \(\mathcal{G}'\) are required to satisfy the additional constraint
	\[
	f(v_1) = f(v_2).
	\]
	This implies that
	\(
	H_0^\alpha(\mathcal{G}') \subset H_0^\alpha(\mathcal{G}),
	\)
	by \eqref{eqn:variationalcharacterization}, the supremum over the smaller space is less than or equal to that over the larger space:
	\[
	T_{\alpha}(\mathcal{G}') \leq T_{\alpha}(\mathcal{G}). \qedhere
	\]
\end{proof}
\section{Bounds on Torsional Rigidity}\label{section:bounds}
\noindent
Having established the variational characterization and surgery principles for fractional torsional rigidity, we now turn to general bounds for $T_\alpha(\mathcal{G})$ in terms of geometric quantities of the graph. 
The aim is to identify universal lower and upper bounds that depend only on simple features such as total length, number of edges, or boundary vertices. 
In this section, we combine the previously established surgery principles with explicit constructions to derive meaningful inequalities for the torsional rigidity.

\begin{theorem}\label{thm:lowerbound}
Let $\mathcal{G}$ be a compact, connected metric graph with total length $|\mathcal{G}|$ and with $|\mathcal{E}|$ edges. Fix 
\(
\alpha\in(0,1).
\)
Set 
\(
L:=\frac{|\mathcal{G}|}{|\mathcal{E}|},
\)
and let $\mathcal{F}$ be an equilateral flower graph with $|\mathcal{E}|$ petals (edges) each of length $L$ (hence $|\mathcal{F}|=|\mathcal{G}|$). Then the fractional torsional rigidity satisfies the lower bound
\[
T_\alpha(\mathcal{G}) \;\ge\; T_\alpha(\mathcal{F})
\;=\; |\mathcal{E}|\; 8\,L^{1+2\alpha}\,\pi^{-2(1+\alpha)}\Big(1-2^{-2(1+\alpha)}\Big)\,\zeta\big(2(1+\alpha)\big).
\]
\end{theorem}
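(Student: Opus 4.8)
The strategy is to collapse $\mathcal{G}$ onto a flower by repeated gluing, compute the torsional rigidity of that flower from the explicit spectral calculation in Example~\ref{ex: flower}, and conclude with a single convexity inequality. \emph{Step~1 (collapse to a flower).} I would identify all vertices of $\mathcal{G}$ into one vertex $\mathsf{v}_\ast$, performed as finitely many successive gluings of pairs of distinct vertices; by Theorem~\ref{prin:gluingvertices} each step can only decrease the fractional torsional rigidity, so the resulting graph $\mathcal{F}_0$ satisfies $T_\alpha(\mathcal{G}) \ge T_\alpha(\mathcal{F}_0)$. Since $\mathcal{G}$ is connected and $\mathcal{V}_D \neq \emptyset$, the graph $\mathcal{F}_0$ is a flower whose $|\mathcal{E}|$ petals are the original edges of $\mathcal{G}$, now loops based at $\mathsf{v}_\ast$, of lengths $\ell_1,\dots,\ell_{|\mathcal{E}|}$ with $\sum_j \ell_j = |\mathcal{G}|$, and with a Dirichlet condition at $\mathsf{v}_\ast$ inherited from any Dirichlet vertex of $\mathcal{G}$. (For $\alpha\le\tfrac12$ the pointwise gluing constraint is read in the spectral sense, exactly as in the proof of Theorem~\ref{prin:gluingvertices}.)

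\emph{Step~2 (torsional rigidity of $\mathcal{F}_0$).} Because $\mathsf{v}_\ast$ is a Dirichlet vertex, the Laplacian on $\mathcal{F}_0$ decouples into the Dirichlet--Dirichlet Laplacians on the petals $[0,\ell_j]$, and its $L^2$-orthonormal eigenbasis consists of the functions supported on a single petal, namely $\varphi_{j,n}(x)=\sqrt{2/\ell_j}\,\sin(n\pi x/\ell_j)$ with eigenvalue $(n\pi/\ell_j)^2$, exactly as in Example~\ref{ex: flower}. Consequently the spectral series~\eqref{eq:torsional_rigidity_series} for $T_\alpha(\mathcal{F}_0)$ decomposes as a sum over $j$, in which the $j$-th block is precisely the fractional torsional rigidity of the interval $[0,\ell_j]$ with Dirichlet conditions at both ends. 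By Example~\ref{ex: flower} (the case $N=1$ with $L$ replaced by $\ell_j$) this block equals $C\,\ell_j^{\,1+2\alpha}$ where $C := 8\,\pi^{-2(1+\alpha)}\bigl(1-2^{-2(1+\alpha)}\bigr)\zeta\bigl(2(1+\alpha)\bigr)$, and therefore $T_\alpha(\mathcal{F}_0) = C\sum_{j=1}^{|\mathcal{E}|}\ell_j^{\,1+2\alpha}$.

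\emph{Step~3 (convexity and conclusion).} Since $\alpha\in(0,1)$ gives $1+2\alpha>1$, the map $t\mapsto t^{1+2\alpha}$ is convex on $[0,\infty)$, so Jensen's inequality yields $\tfrac{1}{|\mathcal{E}|}\sum_j \ell_j^{\,1+2\alpha} \ge \bigl(\tfrac{1}{|\mathcal{E}|}\sum_j \ell_j\bigr)^{1+2\alpha} = L^{\,1+2\alpha}$. Multiplying by $C\,|\mathcal{E}|$ and chaining the inequalities from Steps~1--3 gives $T_\alpha(\mathcal{G}) \ge T_\alpha(\mathcal{F}_0) \ge C\,|\mathcal{E}|\,L^{\,1+2\alpha}$, and the right-hand side is exactly $T_\alpha(\mathcal{F})$ by Example~\ref{ex: flower} applied to the equilateral flower with $|\mathcal{E}|$ petals of length $L$. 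This is the asserted lower bound.

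The step I expect to require the most care is not analytic but structural: one must verify that collapsing every vertex of a connected metric graph genuinely produces a flower with $|\mathcal{E}|$ loops and the correct Dirichlet vertex, and --- the more delicate point --- that the spectral fractional Laplacian on $\mathcal{F}_0$ respects the edgewise direct sum $L^2(\mathcal{F}_0) = \bigoplus_j L^2(0,\ell_j)$, which is what makes $T_\alpha$ additive over the petals in Step~2. Once this decoupling is in place, the remainder is just the explicit series of Example~\ref{ex: flower} together with the one-line convexity estimate.
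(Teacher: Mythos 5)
Your proof is correct and follows essentially the same route as the paper (glue all vertices into one and invoke Theorem~\ref{prin:gluingvertices}), but it is also \emph{more complete}, and in a way that actually matters. The paper's two-line proof asserts that gluing transforms $\mathcal{G}$ into ``a flower graph $\mathcal{F}$ with the same number of edges and total length'' and stops there. However, the flower produced by gluing has petal lengths $\ell_1,\dots,\ell_{|\mathcal{E}|}$ equal to the original edge lengths of $\mathcal{G}$, which are generally \emph{not} all equal to $L = |\mathcal{G}|/|\mathcal{E}|$; the explicit value in the theorem is that of the \emph{equilateral} flower $\mathcal{F}$. The paper thus silently uses, without proving, the comparison $T_\alpha(\mathcal{F}_0)\ge T_\alpha(\mathcal{F})$ between the non-equilateral flower $\mathcal{F}_0$ and the equilateral one. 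Your Step~2 (Dirichlet at the center decouples the flower into Dirichlet--Dirichlet intervals, yielding $T_\alpha(\mathcal{F}_0)=C\sum_j\ell_j^{1+2\alpha}$ with $C=8\pi^{-2(1+\alpha)}\bigl(1-2^{-2(1+\alpha)}\bigr)\zeta\bigl(2(1+\alpha)\bigr)$) together with your Step~3 (Jensen for the convex map $t\mapsto t^{1+2\alpha}$, so $\sum_j\ell_j^{1+2\alpha}\ge|\mathcal{E}|\,L^{1+2\alpha}$) supplies precisely that missing comparison. Your structural caveat at the end is also well placed: the decoupling of $L^2(\mathcal{F}_0)$ and of the fractional Laplacian over petals is valid exactly because the center is Dirichlet (inherited from some vertex of $\mathcal{V}_D$), and that is what makes the spectral series block-additive over edges. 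In short, your proof is a corrected and completed version of the paper's argument rather than a different one.
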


\begin{proof}
By identifying (gluing) all internal vertices appropriately, $\mathcal{G}$ can be transformed into a flower graph $\mathcal{F}$ with the same number of edges and total length. 
By Theorem \ref{prin:gluingvertices}, one obtains 
\[
T_\alpha(\mathcal{G}) \;\ge\; T_\alpha(\mathcal{F}).\qedhere
\]
\end{proof}

\begin{theorem}
Let $\mathcal{G}$ be a compact, connected metric graph of total length $|\mathcal{G}|$, equipped with Dirichlet conditions on a nonempty subset $\mathcal{V}_D$ of its vertices. 
Fix $\alpha \in (0,1)$. 
Then the fractional torsional rigidity satisfies the upper bound
\[
T_{\alpha}(\mathcal{G}) \;\le\; T_\alpha(\mathcal{I})=\frac{8\,2^{2\alpha}\,|\mathcal{G}|^{2\alpha+1}}{\pi^{2+2\alpha}} \left(1 - 2^{-(2+2\alpha)}\right)\zeta(2+2\alpha),
\]
where $\mathcal{I}$ is the interval graph $[0,|\mathcal{G}|]$ endowed with Dirichlet boundary at one endpoint and Neumann boundary at the other. 
\end{theorem}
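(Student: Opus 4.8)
The plan is to reduce an arbitrary graph $\mathcal{G}$ to the interval $\mathcal{I}=[0,|\mathcal{G}|]$ in two moves, each of which can only increase the fractional torsional rigidity, and then to invoke the explicit value of $T_\alpha(\mathcal{I})$ computed in Example~\ref{ex: interval}. The first move is edge doubling: replace every edge of $\mathcal{G}$ by two parallel copies of the same length to obtain $\tilde{\mathcal{G}}$. By Theorem~\ref{princ:edge-doubling} we have $T_\alpha(\mathcal{G}) \le \tfrac12 T_\alpha(\tilde{\mathcal{G}})$. The point of doubling is that $\tilde{\mathcal{G}}$ has all vertex degrees even, so it admits an Eulerian circuit; moreover $|\tilde{\mathcal{G}}| = 2|\mathcal{G}|$.

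The second move applies Theorem~\ref{princ:unfold-cycle} to $\tilde{\mathcal{G}}$: since all degrees are even, there is a cycle graph $\mathcal{C}$ of total length $|\tilde{\mathcal{G}}| = 2|\mathcal{G}|$ with $T_\alpha(\tilde{\mathcal{G}}) \le T_\alpha(\mathcal{C})$. We must be slightly careful with Dirichlet vertices here: the unfolding $\Phi$ in Theorem~\ref{princ:unfold-cycle} carries the constraint $v|_{\mathcal{V}_D}=0$ on $\tilde{\mathcal{G}}$ over to vanishing at the corresponding points of $\mathcal{C}$, so $\mathcal{C}$ inherits at least one Dirichlet vertex $v_0$ (the image of any Dirichlet vertex of $\mathcal{G}$, which survives doubling). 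Then Theorem~\ref{princ:cut-cycle} applies: cutting $\mathcal{C}$ at $v_0$ yields an interval graph $\mathcal{I}_{2|\mathcal{G}|}$ of length $2|\mathcal{G}|$ with Dirichlet conditions at both endpoints, and $T_\alpha(\mathcal{C}) = T_\alpha(\mathcal{I}_{2|\mathcal{G}|})$. Chaining these, $T_\alpha(\mathcal{G}) \le \tfrac12 T_\alpha(\mathcal{I}_{2|\mathcal{G}|})$.

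It remains to compute $T_\alpha$ of the interval of length $2|\mathcal{G}|$ with Dirichlet conditions at \emph{both} endpoints and compare with the Dirichlet--Neumann interval of length $|\mathcal{G}|$. On $[0,2|\mathcal{G}|]$ with double Dirichlet conditions the eigenpairs are $\lambda_n = (n\pi/(2|\mathcal{G}|))^2$, $\varphi_n(x) = \sqrt{1/|\mathcal{G}|}\,\sin(n\pi x/(2|\mathcal{G}|))$, and $\langle 1,\varphi_n\rangle$ vanishes for $n$ even while equalling $\tfrac{2\sqrt{|\mathcal{G}|}\cdot 2|\mathcal{G}|}{n\pi}$ for $n$ odd (same computation as in Example~\ref{ex: flower} with $L=2|\mathcal{G}|$, up to the normalisation of $\varphi_n$). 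Summing $|\langle 1,\varphi_n\rangle|^2/\lambda_n^\alpha$ over odd $n$ and using \eqref{eq: Riemann zeta} gives $T_\alpha(\mathcal{I}_{2|\mathcal{G}|}) = 8\,(2|\mathcal{G}|)^{1+2\alpha}\pi^{-2(1+\alpha)}(1-2^{-2(1+\alpha)})\zeta(2(1+\alpha))$; halving this and rewriting $(2|\mathcal{G}|)^{1+2\alpha} = 2^{1+2\alpha}|\mathcal{G}|^{1+2\alpha}$ yields $\tfrac12 T_\alpha(\mathcal{I}_{2|\mathcal{G}|}) = 8\cdot 2^{2\alpha}|\mathcal{G}|^{2\alpha+1}\pi^{-(2+2\alpha)}(1-2^{-(2+2\alpha)})\zeta(2+2\alpha)$, which is exactly $T_\alpha(\mathcal{I})$ from \eqref{torsional rigidity of interval graph} with $L=|\mathcal{G}|$. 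This identity $\tfrac12 T_\alpha(\mathcal{I}_{2|\mathcal{G}|}) = T_\alpha(\mathcal{I})$ — the double-Dirichlet interval of length $2\ell$ folds onto the Dirichlet--Neumann interval of length $\ell$ — is the arithmetic heart of the argument, and the step most likely to need care is verifying that the chain of surgery inequalities correctly tracks the Dirichlet vertex set so that all three cited theorems genuinely apply; the zeta-function bookkeeping is then routine.
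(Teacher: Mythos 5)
Your proposal is correct and follows the same three-step surgery chain as the paper's proof (edge doubling, unfolding to a cycle, cutting at a Dirichlet vertex, then computing the double-Dirichlet interval of length $2|\mathcal{G}|$ and halving); your remark about tracking the Dirichlet vertex through the chain is a welcome clarification that the paper leaves implicit. One minor slip: the inner product $\langle 1,\varphi_n\rangle$ on the double-Dirichlet interval of length $2|\mathcal{G}|$ should be $\tfrac{4\sqrt{|\mathcal{G}|}}{n\pi}$ for odd $n$ (not $\tfrac{2\sqrt{|\mathcal{G}|}\cdot 2|\mathcal{G}|}{n\pi}$), but your stated value of $T_\alpha(\mathcal{I}_{2|\mathcal{G}|})$ and the final identity $\tfrac12 T_\alpha(\mathcal{I}_{2|\mathcal{G}|}) = T_\alpha(\mathcal{I})$ are both correct.
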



\begin{proof}
To establish the upper bound, we apply the surgery principles sequentially. First, by the edge-doubling principle (Theorem~\ref{princ:edge-doubling}), we have
\[
T_{\alpha}(\mathcal{G}) \leq \frac{1}{2} \, T_{\alpha}(\tilde{\mathcal{G}}),
\]
where $\tilde{\mathcal{G}}$ denotes the graph obtained by doubling each edge of $\mathcal{G}$.  
Next, applying the unfolding principle (Theorem~\ref{princ:unfold-cycle}) to $\tilde{\mathcal{G}}$ yields a cycle graph $\mathcal{C}_{2|\mathcal{G}|}$ of length $2|\mathcal{G}|$, satisfying
\[
T_\alpha(\mathcal{G})\leq \frac{1}{2}T_{\alpha}(\tilde{\mathcal{G}}) \le \frac{1}{2}T_{\alpha}(\mathcal{C}_{2|\mathcal{G}|}).
\]
Then, by cutting the cycle at a Dirichlet vertex using Theorem~\ref{princ:cut-cycle}, we obtain an interval graph $\mathcal{I}_{2|\mathcal{G}|}$ with Dirichlet endpoints, so that
\[
T_{\alpha}(\mathcal{C}_{2|\mathcal{G}|}) = T_{\alpha}(\mathcal{I}_{2|\mathcal{G}|}).
\]
Computing the fractional torsional rigidity of the interval graph with two Dirichlet end points and using Example \ref{ex: interval}, we have 
\[
T_{\alpha}(\mathcal{I}_{2|\mathcal{G}|}) =   \frac{8  }{\pi^{2+2\alpha}}
\Big(1 - 2^{-(2+2\alpha)}\Big)\zeta(2+2\alpha)  \, |2\mathcal{G}|^{1+2\alpha}=2T_{\alpha}([0,|\mathcal{G}|]),
\]
where the interval $[0,|\mathcal{G}|]$ is endowed with Dirichlet boundary at one endpoint and Neumann boundary at the other. 
Combining these steps, we conclude
\[
T_{\alpha}(\mathcal{G}) \leq \frac{1}{2}  T_\alpha(\tilde{G}) \leq \frac{1}{2}T_\alpha (\mathcal{I}_{2|\mathcal{G}|})
= T_\alpha([0,|\mathcal{G}|]).\qedhere
\]
\end{proof}

\end{document}